\documentclass[11pt,reqno]{amsart}
\usepackage{amsmath,amssymb,amsthm,amsfonts,mathrsfs,latexsym,times,color,bm}
\usepackage{amsmath,amsfonts}
\usepackage{amsthm}
\usepackage{graphicx}
\usepackage{color}
\usepackage{multicol}
\usepackage[notref,notcite]{showkeys}


\topmargin=-0.1in \oddsidemargin3mm \evensidemargin3mm
\textheight220mm \textwidth160mm

\newtheorem{thm}{Theorem}[section]

\newtheorem{notation}{Notation}[section]
\newtheorem{Lemma}[thm]{Lemma}
\newtheorem{remark}{Remark}[section]
\newtheorem{theorem}[thm]{Theorem}

\newcommand{\x}{{\bf x}}

\numberwithin{equation}{section}

\newcommand{\beq}{\begin{equation}}
\newcommand{\eeq}{\end{equation}}
\newcommand{\ben}{\begin{eqnarray}}
\newcommand{\een}{\end{eqnarray}}
\newcommand{\beno}{\begin{eqnarray*}}
\newcommand{\eeno}{\end{eqnarray*}}

\newcommand{\bu}{\mathbf{u}}
\newcommand{\bb}{\mathbf{b}}
\newcommand{\bv}{\mathbf{v}}
\newcommand{\Bf}{\mathbf{f}}
\newcommand{\Bh}{\mathbf{h}}

\newcommand{\bF}{\mathbf{F}}
\newcommand{\bG}{\mathbf{G}}
\newcommand{\bH}{\mathbf{H}}
\newcommand{\bX}{\mathbf{X}}
\newcommand{\bW}{\mathbf{W}}
\newcommand{\bU}{\mathbf{U}}
\newcommand{\bB}{\mathbf{B}}

\numberwithin{equation}{section}

\begin{document}
\title[3D Hall-MHD equations with fractional Laplacians]{On 3D Hall-MHD equations with fractional Laplacians: global well-posedness}

\subjclass[2010]{35A01; 76W05}

\author{Huali  Zhang }
\address{Changsha University
of Science and Technology, Changsha, 410114, People's Republic of China.}
\email{zhlmath@yahoo.com}
\author{Kun Zhao }
\address{Department of Mathematics, Tulane University, New Orleans, LA 70118, United States.}
\email{kzhao@tulane.edu}

\keywords{Hall-MHD equations; fractional Laplacian; Cauchy problem; global well-posedness}

\begin{abstract}
The Cauchy problem for 3D incompressible Hall-magnetohydrodynamics (Hall-MHD) system with fractional Laplacians $(-\Delta)^{\frac{1}{2}}$ is studied. The well-posedness of 3D incompressible Hall-MHD equations remains an open problem with fractional diffusion $(-\Delta)^{\frac{1}{2}}$. First, global well-posedness of small-energy solutions with general initial data in $H^s$, $s>\frac{5}{2}$, is proved. Second, a special class of large-energy initial data is constructed, with which the Cauchy problem is globally well-posed. The proofs rely upon a new global bound of energy estimates involving Littlewood-Paley decomposition and Sobolev inequalities, which enables one to overcome the $\frac{1}{2}$-order derivative loss of the magnetic field.
\end{abstract}
	
\maketitle

\section{Introduction}

\subsection{Overview} This paper is oriented towards the Cauchy problem for 3D incompressible Hall-MHD equations with fractional Laplacians:	
\begin{equation}\label{HMHD}
\begin{cases}
\bu_t+(-\Delta)^{\alpha} \bu+\bu \cdot \nabla \bu+\nabla P - \bb \cdot \nabla \bb=\mathbf{0},\\
\bb_t+(-\Delta)^{\beta} \bb+\bu \cdot \nabla \bb-\bb \cdot \nabla \bu+ \nabla \times \left( \left(\nabla \times \bb \right) \times \bb\right)=\mathbf{0},\\
\nabla\cdot \bu=0, \quad \nabla\cdot \bb=0;\\
\bu|_{t=0}=\bu_0, \ \ \bb|_{t=0}=\bb_0,
\end{cases}
\end{equation}
$\mathbf{x}\in\mathbb{R}^3, t>0$. The object is to identify initial conditions under which \eqref{HMHD} is globally well-posed.

\subsection{Background} The classical 3D magnetohydrodynamics (MHD) equations with magnetic diffusion:
\begin{equation}\label{MHD}
\begin{cases}
\bu_t+ (-\Delta) \bu+\bu \cdot \nabla \bu+\nabla P - \bb \cdot \nabla \bb=\mathbf{0},\\
\bb_t+ (-\Delta) \bb+\bu \cdot \nabla \bb-\bb \cdot \nabla \bu=\mathbf{0},\\
\nabla\cdot \bu=0, \quad \nabla\cdot \bb=0,
\end{cases}
\end{equation}
$\mathbf{x}\in\mathbb{R}^3, t>0$, describe the macroscopic behavior of electrically conducting incompressible fluids in a magnetic field. Here, the unknown functions $\bu=\bu(\mathbf{x},t)\in\mathbb{R}^3$, $P=P(\mathbf{x},t)\in\mathbb{R}$, and $\bb=\bb(\mathbf{x},t)\in\mathbb{R}^3$ denote, respectively, the fluid velocity, pressure, and magnetic field.

In magnetic fusion theory, if a so-called ``Hall current'' perpendicular to the magnetic field is present in the equation for the current and if the magnetic field is very large, then the gyrofrequency of electrons greatly exceeds the average frequency of electron collision. This reflects that the Hall current effect has a greater influence on MHD fluids than the electrical resistivity does. However, in ideal MHD theory, the Hall-current term is routinely neglected in order to simplify the underlying mathematical analysis. In the 1960 paper entitled ``{\it Studies on Magnetohydrodynamic Waves and Other Anisotropic Wave Motions}'' \cite{L}, M.J. Lighthill criticized the neglect of the Hall term in ideal MHD theory, whose opinion implies that such a negligence breaks down the ideal MHD theory near the boundary region of any high-density plasma. As a remedy, Lighthill brought the Hall term back to the classical MHD equations, resulting in the standard Hall-MHD equations:
\begin{equation}\label{SHMHD}
\begin{cases}
\bu_t+ (-\Delta) \bu+\bu \cdot \nabla \bu+\nabla P - \bb \cdot \nabla \bb=\mathbf{0},\\
\bb_t+ (-\Delta) \bb+\bu \cdot \nabla \bb-\bb \cdot \nabla \bu+ \nabla \times \left( \left(\nabla \times \bb \right) \times \bb\right)=\mathbf{0},\\
\nabla\cdot \bu=0, \quad \nabla\cdot \bb=0.
\end{cases}
\end{equation}
Comparing with the classical MHD equations, \eqref{MHD}, the Hall term in \eqref{SHMHD} brings about more difficulties to the underlying analysis of qualitative behaviors of the standard Hall-MHD equations. Of interests to  present paper, Chae-Degond-Liu \cite{CDL} established global well-posedness of classical solutions for \eqref{SHMHD} with small initial data; Chae-Lee \cite{CL} improved the result of \cite{CDL} under weaker smallness assumptions on initial data; Fan-Huang-Nakamura \cite{FHN} studied global well-posedness of
axisymmetric solutions with large initial data; and Li-Yu-Zhu \cite{Li} and Zhang \cite{Zhang} constructed a class of (non-axisymmetric) large initial data and proved global well-posedness of classical solutions. There are also works dealing with low regularity solutions, extensibility criteria, and large-time asymptotic behavior for \eqref{SHMHD}, and we refer the readers to \cite{AD,CW,CW2,Duan,JO,KL,WZ,YM} for more information in these directions.

Despite their long-standing reputations in mathematical fluid mechanics, laboratory experiments and numerical simulations have suggested that classical models, such as the Navier-Stokes equations, can be modified to achieve better performance in real world applications. In particular, recent developments in mathematical fluid mechanics indicate that researchers have found it favorable to replace the Laplace operator $-\Delta$ by fractional powers of $-\Delta$. Such a generalization of ``normal" diffusion has been enforced to many classical fluid dynamics systems, including the Navier-Stokes equations \cite{LZ,W1}, Boussinesq equations \cite{HKR,HKR1,HZ}, MHD equations \cite{CW-old,W}, and surface quasi-geostrophic (SQG) equation \cite{CV,CCD,CCW,CV1,DL,KN,KNV}. Mathematical studies of these generalized models inspired invention of many new analytical methods, such as recent breakthrough in the studies of 2D SQG equation \cite{CV,CV1,KN,KNV}.

Inspired by the generalization of classical fluid dynamics models, fractional Laplacian has been implemented to \eqref{SHMHD}, giving rise to \eqref{HMHD}. In system \eqref{HMHD}, the exponents $\alpha$ and $\beta$ generally satisfy $0\le \alpha \le 1$ and $\beta\ge0$, and the fractional Laplacian $(-\Delta)^{\gamma}$ is defined through Fourier transform, namely, $\mathcal{F}((-\Delta)^{\gamma}f)(\bm{\xi})=|\bm{\xi}|^{2\gamma}\mathcal{F}(f)(\bm\xi)$. Rigorous mathematical study of \eqref{HMHD} was initiated by Chae-Wan-Wu \cite{CWW}, who established local well-posedness of classical solutions when there is no viscosity and $\beta \in (\frac{1}{2}, 1]$. The result of \cite{CWW} was improved by Dai \cite{Dai} through lowering the regularity of initial data. On the other hand, global well-posedness of classical solutions with small initial data is established when $\alpha,\beta \in [1,\frac{7}{6})$ by Pan-Ma-Zhu \cite{PMZ} and when $\alpha,\beta\in (1, \frac{3}{2})$ by Wu-Yu-Tang \cite{WYT}. There are also works concerning extensibility criteria and mild solutions, and we refer the readers to \cite{JZ,PZ, WZ1,WL,Ye} for detailed discussions on these topics.

\subsection{Motivation}
We observe that all of the previous studies on \eqref{HMHD} are concerned with the case when $\beta > \frac{1}{2}$. Since smaller $\beta$ generates weaker magnetic diffusion, analysis of \eqref{HMHD} when $\beta\in[0,\frac12]$ is more challenging than the case when $\beta>\frac12$. As a matter of fact, even in the case when $\beta=\frac12$,  well-posedness (local or global) of classical solutions to \eqref{HMHD} still remains open. Main difficulty stems from the $\frac{1}{2}$-order regularity loss of the magnetic field. To explain the difficulty, let us consider the simple situation when $\bu\equiv\mathbf{0}$. In this case, the equation for the magnetic field reads
$$
\bb_t+ (-\Delta)^{\frac12} \bb+ \nabla \times \left( \left(\nabla \times \bb \right) \times \bb\right)=\mathbf{0}.
$$
By standard commutator estimates, we can show that
\begin{equation*}
\begin{split}
&\frac{\mathrm{d}}{\mathrm{d}t}\|\bb\|^2_{H^s}+ \|\Lambda^{\frac{1}{2}}\bb\|^2_{H^s}\\
\lesssim \ &  \big| \int_{\mathbb{R}^3} \Lambda^s (( \nabla \times \bb) \times \bb)  \cdot \Lambda^s (\nabla \times \bb) \mathrm{d}\x  \big| +\text{lower order terms,}\\
\lesssim \ & \big|  \int_{\mathbb{R}^3} \big( \Lambda^s(( \nabla \times \bb) \times \bb) - \Lambda^s  (\nabla  \times \bb) \times \bb \big)\cdot \Lambda^s (\nabla \times \bb)  \mathrm{d}\x \big| +\text{lower order terms,}\\
\lesssim \ &  \|\bb\|_{H^s}\|\nabla \bb\|_{L^\infty} \|\nabla \bb\|_{H^s}+\text{lower order terms},
\end{split}
\end{equation*}
where $\Lambda=(-\Delta)^{\frac{1}{2}}$ and $s>0$. From above calculations we see that a global energy bound of the solution can not be achieved if one remains in the $H^s$-framework. However, if one could manage to show that
\begin{equation*}
\begin{split}
&\frac{\mathrm{d}}{\mathrm{d}t}\|\bb\|^2_{H^s}+ \|\Lambda^{\frac{1}{2}}\bb\|^2_{H^s} \lesssim  \|\nabla \bb\|_{L^\infty} \|\Lambda^{\frac{1}{2}} \bb\|^2_{H^s}+\text{lower order terms},
  \end{split}
\end{equation*}
then a global energy bound can be achieved at least in the regime of small-energy solutions.  Motivated by such an observation, we set our goal of this paper to identify initial conditions under which classical solutions to \eqref{HMHD} with $\beta=\frac{1}{2}$ are globally well-posed.

\subsection{Statement of Results}
Before stating our results, we explain some notations.

\begin{notation} Unless otherwise specified, $C$ denotes a generic constant which is independent of the unknown functions and time. For two positive quantities $f$ and $g$, $f\lesssim g$ means there exists $C>0$ such that $f\le Cg$, while $f\simeq g$ means there exists $C>0$ such that $C^{-1}g\le f\le Cg$. The symbol $[A, B]$ denotes the commutator: $[A,B]=AB-BA$. The symbol ${\Delta}_j$ denotes the homogeneous frequency localized operator with frequency $2^j, j \in \mathbb{Z}$. For $f \in H^s(\mathbb{R}^3)$, we denote $\|f\|_{H^s}:= \|f\|_{L^2}+\|f\|_{\dot{H}^s}$, where $\|f\|^2_{\dot{H}^s} := {\sum_{j \geq -1}} 2^{2js}\|{\Delta}_j f\|^2_{L^2} $. We also denote $\|f\|^r_{\dot{B}^s_{p,r}} := {\sum_{j \geq -1}}2^{jsr}\|{\Delta}_j f\|^r_{L^p}$.
\end{notation}

Global well-posedness of classical solutions to \eqref{HMHD} for general initial data with small energy is established in the following theorem.

\begin{theorem}\label{thm1}
Consider the Cauchy problem \eqref{HMHD} with $0 \leq \alpha \leq 1$ and $\beta=\frac{1}{2}$. Let $s>\frac52$. Suppose that the initial data satisfy $\nabla \cdot \bu_0=\nabla \cdot \bb_0=0$, and
\begin{equation}\label{i1}
  \|\bu_0\|_{H^s}+\|\bb_0\|_{H^s} \leq \epsilon,
\end{equation}
for some constant $\epsilon>0$. If $\epsilon$ is sufficiently, then there exists a unique and global-in-time solution $(\bu,\bb)$ to \eqref{HMHD}. Moreover, the solution satisfies the following energy estimate:
\begin{equation}\label{i2}
\begin{split}
&\|\bu(t) \|^2_{H^s}+ \| \bb(t) \|^2_{H^s}+ \int^t_0 \| \Lambda^{\alpha} \bu(\tau) \|^2_{H^s} \mathrm{d} \tau+ \int^t_0 \| \Lambda^{\frac{1}{2}} \bb(\tau) \|^2_{H^s} \mathrm{d} \tau
 \lesssim \epsilon.
  \end{split}
\end{equation}
\end{theorem}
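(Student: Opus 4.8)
The plan is to combine a standard local existence theory with a global-in-time a priori bound in $H^s$ and a continuation argument. Since $s>\frac52$ makes $H^s(\mathbb{R}^3)$ an algebra that embeds into $W^{1,\infty}$, local existence and uniqueness of a solution on a maximal interval $[0,T^*)$ follow from a routine mollification/fixed-point scheme; the whole difficulty is to show that the $H^s$-norm cannot blow up, for which I would prove a closed differential inequality for the energy. First I would record the basic $L^2$ law: pairing the two equations with $\bu,\bb$, the advection and Lorentz terms cancel by $\nabla\cdot\bu=\nabla\cdot\bb=0$, and the Hall term vanishes because $\int_{\mathbb{R}^3}(\mathbf{j}\times\bb)\cdot\mathbf{j}\,\mathrm{d}\x=0$ (here $\mathbf{j}:=\nabla\times\bb$), giving $\|\bu\|_{L^2}^2+\|\bb\|_{L^2}^2+2\int_0^t(\|\Lambda^\alpha\bu\|_{L^2}^2+\|\Lambda^{1/2}\bb\|_{L^2}^2)\le\|\bu_0\|_{L^2}^2+\|\bb_0\|_{L^2}^2$. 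For the top order I would apply $\Delta_j$ to both equations, pair with $\Delta_j\bu,\Delta_j\bb$ in $L^2$, multiply by $2^{2js}$ and sum over $j\ge-1$. The fractional diffusion produces the dissipation $\|\Lambda^\alpha\bu\|_{H^s}^2+\|\Lambda^{1/2}\bb\|_{H^s}^2$; the pressure drops out after $\Delta_j$ by incompressibility; the advection terms $\bu\cdot\nabla\bu,\ \bu\cdot\nabla\bb$ contract to transport commutators $[\Delta_j,\bu\cdot\nabla]$ bounded by $\|\nabla\bu\|_{L^\infty}(\|\bu\|_{H^s}^2+\|\bb\|_{H^s}^2)$; and the coupling terms $-\bb\cdot\nabla\bb$ and $-\bb\cdot\nabla\bu$ have their leading parts cancel after integration by parts, leaving commutators bounded by $\|\nabla\bb\|_{L^\infty}(\|\bu\|_{H^s}^2+\|\bb\|_{H^s}^2)$.

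The decisive term is the Hall contribution. Moving the outer curl onto $\Delta_j\bb$ turns it into $\sum_j 2^{2js}\int_{\mathbb{R}^3}\Delta_j(\mathbf{j}\times\bb)\cdot\Delta_j\mathbf{j}\,\mathrm{d}\x$, and I would analyze $\mathbf{j}\times\bb$ by Bony's paraproduct decomposition. In the dangerous piece $\sum_{k\sim j}(S_{k-1}\bb)\times\Delta_k\mathbf{j}$, where the derivative sits on the high-frequency factor, the principal term $((S_{j-1}\bb)\times\Delta_j\mathbf{j})\cdot\Delta_j\mathbf{j}$ vanishes pointwise thanks to the algebraic identity $(\mathbf{a}\times\mathbf{c})\cdot\mathbf{c}=0$; this structural cancellation is what removes the apparent top-order loss. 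What remains is a commutator $[\Delta_j,\,S_{k-1}\bb\,\times\,]\,\Delta_k\mathbf{j}$ whose argument $\Delta_k\mathbf{j}$ is genuinely localized at frequency $2^k\sim2^j$, so the Littlewood--Paley commutator estimate gives $\|[\Delta_j,S_{k-1}\bb\,\times]\Delta_k\mathbf{j}\|_{L^2}\lesssim 2^{-j}\|\nabla\bb\|_{L^\infty}\|\Delta_k\mathbf{j}\|_{L^2}\sim\|\nabla\bb\|_{L^\infty}\|\Delta_k\bb\|_{L^2}$. Since $\|\Delta_j\mathbf{j}\|_{L^2}\sim2^j\|\Delta_j\bb\|_{L^2}$, multiplying by $2^{2js}$ and summing reproduces exactly $\|\nabla\bb\|_{L^\infty}\sum_j2^{j(2s+1)}\|\Delta_j\bb\|_{L^2}^2\sim\|\nabla\bb\|_{L^\infty}\|\Lambda^{1/2}\bb\|_{H^s}^2$, so the derivative loss is precisely one half and is matched by the magnetic dissipation. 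The low--high paraproduct and the remainder place no extra derivative on the high-frequency factor (for the remainder one uses $s>\frac52$ to sum over the high frequencies) and satisfy the same bound. This is exactly the improved inequality anticipated in the Motivation, and I expect that assembling this cancellation-plus-localized-commutator estimate will be the main obstacle of the proof.

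Collecting the estimates yields $\frac{\mathrm{d}}{\mathrm{d}t}(\|\bu\|_{H^s}^2+\|\bb\|_{H^s}^2)+\|\Lambda^\alpha\bu\|_{H^s}^2+\|\Lambda^{1/2}\bb\|_{H^s}^2\lesssim(\|\nabla\bu\|_{L^\infty}+\|\nabla\bb\|_{L^\infty})(\|\bu\|_{H^s}^2+\|\Lambda^{1/2}\bb\|_{H^s}^2)$. Because $s>\frac52$ gives $\|\nabla\bu\|_{L^\infty}+\|\nabla\bb\|_{L^\infty}\lesssim\|\bu\|_{H^s}+\|\bb\|_{H^s}$, the right-hand side is cubic in the energy, and, after using the $L^2$ law to control the lowest-frequency block, a continuity (bootstrap) argument closes the estimate: under the assumption $\|\bu\|_{H^s}^2+\|\bb\|_{H^s}^2\le4\epsilon^2$ on a maximal subinterval, the right-hand side is bounded by $C\epsilon$ times the dissipation, which for $\epsilon$ small is absorbed on the left, forcing $\|\bu\|_{H^s}^2+\|\bb\|_{H^s}^2\le2\epsilon^2$ and hence \eqref{i2}. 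This a priori bound rules out finite-time blow-up, so $T^*=\infty$ and the solution is global. Uniqueness follows from a parallel energy estimate for the difference of two solutions carried out one derivative below the top order, where the same cross-product cancellation again tames the Hall term.
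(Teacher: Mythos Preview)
Your a priori estimate is essentially the paper's. Both move the outer curl onto $\Delta_j\bb$, exploit the cross-product cancellation so that the high-high paraproduct piece $S_{k-1}\bb\times\Delta_k(\nabla\times\bb)$ contributes only through a commutator, and end up with the crucial bound $|K_5|\lesssim 2^j\|\nabla\bb\|_{L^\infty}\|\Delta_j\bb\|_{L^2}\big(\|\Delta_j\bb\|_{L^2}+\sum_{k\ge j-1}2^{j-k}\|\Delta_k\bb\|_{L^2}\big)$, which after multiplying by $2^{2js}$ and summing gives $\|\nabla\bb\|_{L^\infty}\|\Lambda^{1/2}\bb\|_{H^s}^2$. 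The paper reaches this via the identity $\bb\times(\nabla\times\bb)=\tfrac12\nabla|\bb|^2-(\bb\cdot\nabla)\bb$ and then a transport-commutator treatment of each piece, while you decompose $\mathbf{j}\times\bb$ directly; the two routes are equivalent.

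The genuine gap is your sentence ``local existence and uniqueness on a maximal interval $[0,T^*)$ follow from a routine mollification/fixed-point scheme.'' For $\beta=\tfrac12$ this is \emph{not} routine: the paper's Remark~1.1 states explicitly that local well-posedness for general data remains open. The reason is exactly the inequality you derived: the right-hand side carries $\|\nabla\bb\|_{L^\infty}\|\Lambda^{1/2}\bb\|_{H^s}^2$, and $\|\Lambda^{1/2}\bb\|_{H^s}^2$ sits half a derivative above the quantity you are propagating, so a Gr\"onwall argument does not close on any time interval unless $\|\nabla\bb\|_{L^\infty}$ is small enough to absorb the term into the dissipation. In other words, smallness is needed already for \emph{local} existence, not just for globalization. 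The paper therefore does not invoke a pre-existing local theory plus a blow-up criterion; instead it builds frequency-truncated approximants $(\bu^n,\bb^n)$ via the projections $J_n$ (these are ODEs in the finite-band space $H^s_n$, solvable by Picard), shows that the \emph{small-data} a priori estimate gives bounds uniform in $n$, proves $\{(\bu^n,\bb^n)\}$ is Cauchy in $L^2$ (using again the cross-product cancellation and the time-integrability of $\|\Lambda^{1/2}\bb^n\|_{H^s}^2$), and passes to the limit by interpolation. You should replace the ``routine local theory $+$ continuation'' skeleton by this approximation-plus-compactness scheme; once that is done, your argument coincides with the paper's.
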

\begin{remark}
The proof of Theorem \ref{thm1} crucially relies on a global bound of energy estimates. After getting the global bound, we then use iteration method to get the local existence of small solutions. As we referred before, it may occur $\frac12$-order derivative loss if we use the standard energy energy estimates and classical commutator estimates. Therefore, we need to use concrete Littlewoold-Palay decomposition to transfor the derivatives to avoid any derivatives loss. It's a non-trival process, please refer Section 3. But, if $\beta=\frac12$, it remains an open question for there is a unique local solution for general initial data.
\end{remark}

The second result shows that classical solutions to \eqref{HMHD} are global well-posed for a special class of initial data carrying potentially large $H^s$ energy. Our work is motivated by similar discussions on the Navier-Stokes equations \cite{LLZ}, SQG equation \cite{LPW, Zhang2}, classical MHD equations \cite{DTW,LZZ,ZZ0} and standard Hall-MHD equations \cite{Li,Zhang}.

\begin{theorem}\label{thm2}
Consider the Cauchy problem \eqref{HMHD} with $0 \leq \alpha \leq 1$ and $\beta=\frac{1}{2}$. Let $s>\frac52$.  Let $\bv_0$ be a three-dimensional vectorfield satisfying the following properties:
\begin{align*}\label{103}
&\nabla \cdot \bv_0=0, \quad \nabla \times \bv_0=\sqrt{-\Delta}\,\bv_0,\\
& \mathrm{supp}\, \widehat{\bv}_0 \subseteq \{1-\varepsilon \leq |\bm\xi| \leq 1+\varepsilon \}, \quad 0<\varepsilon \leq \frac{1}{2},
\end{align*}
where $\widehat{\bv}_0$ denotes the Fourier transform of $\bv_0$ and $\varepsilon$ is a small parameter. Suppose that the initial functions can be decomposed as  	
$\bu_0=\bu_{01}+\bu_{02},\ \bb_0=\bb_{01}+\bb_{02}$, where the functions in the decomposition satisfy the following conditions:
\begin{equation*}\label{100}
\begin{aligned}
&\nabla\cdot\bu_{01}=0, &\quad&\nabla\cdot\bb_{01}=0,\\
&\bu_{02}=\alpha_1 \bv_0, &\quad& \bb_{02}=\alpha_2 \bv_0,
\end{aligned}
\end{equation*}
for some constants $\alpha_1$ and $\alpha_2$. Suppose that $\bu_{01}$, $\bb_{01}$ and $\bv_0$ satisfy
\begin{equation}\label{111}
\left( \|\bu_{01}\|^2_{H^{s}}+\|\bb_{01}\|^2_{H^{s}}+ \big(\varepsilon^{s+\frac{1}{2}} +\varepsilon\big) \|\widehat{\bv}_0\|_{L^2_{\xi}} \|\widehat{\bv}_0\|_{L^1_{\bm\xi}} +\|\widehat{\bv}_0\|_{L^1_{\bm\xi}} \right) \exp \big( C \|\widehat{\bv}_0\|_{L^1_{\bm\xi}} \big) \leq \delta
\end{equation}	
for some constant $\delta>0$ and a generic constant $C>0$. Then there exists a unique and global-in-time solution to \eqref{HMHD}, provided $\varepsilon$ and $\delta$ are sufficiently small.
\end{theorem}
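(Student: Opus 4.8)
The plan is to solve \eqref{HMHD} by perturbing off an explicit reference flow generated by the Beltrami field $\bv_0$, absorbing the possibly large amplitude of $\bv_0$ through the gap between its $L^1$ and $L^2$ Fourier norms on the thin annulus $\{1-\varepsilon\le|\bm\xi|\le 1+\varepsilon\}$, where Cauchy-Schwarz gives $\|\widehat{\bv}_0\|_{L^1_{\bm\xi}}\lesssim\varepsilon^{1/2}\|\widehat{\bv}_0\|_{L^2_\xi}$. First I would introduce the reference pair $\bU(t):=\alpha_1 e^{-t(-\Delta)^{\alpha}}\bv_0$ and $\bB(t):=\alpha_2 e^{-t(-\Delta)^{1/2}}\bv_0$, the linear fractional-heat evolutions of $\bu_{02},\bb_{02}$. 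Since the defining Fourier multipliers commute with $\nabla\cdot$, $\nabla\times$ and $\Lambda$ and preserve frequency support, the pair stays divergence free, remains supported in the annulus, and keeps the Beltrami relations $\nabla\times\bU=\Lambda\bU$ and $\nabla\times\bB=\Lambda\bB$ for all $t$. On the annulus $\Lambda=1+O(\varepsilon)$, which forces the reference nonlinearities to nearly cancel: $(\nabla\times\bB)\times\bB=((\Lambda-1)\bB)\times\bB=O(\varepsilon)$ because $\bB\times\bB=0$; by $\bv\cdot\nabla\bv=\tfrac12\nabla|\bv|^2-\bv\times(\nabla\times\bv)$ one gets $\bU\cdot\nabla\bU-\bB\cdot\nabla\bB=\tfrac12\nabla(|\bU|^2-|\bB|^2)+O(\varepsilon)$; and since $\bU\cdot\nabla\bB-\bB\cdot\nabla\bU=-\nabla\times(\bU\times\bB)$ with $\bU\times\bB$ vanishing to $O(\varepsilon)$ (exactly when $\alpha=\tfrac12$, where $\bU,\bB$ are proportional), the cross term is $O(\varepsilon)$ as well. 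Hence $(\bU,\bB)$ solves \eqref{HMHD} up to a pressure gradient and residual forcings $(\bF,\bG)$ whose $H^s$ norms I would bound by an $\varepsilon$-power times $\|\widehat{\bv}_0\|_{L^2_\xi}\|\widehat{\bv}_0\|_{L^1_{\bm\xi}}$, producing the $(\varepsilon^{s+1/2}+\varepsilon)$ contributions in \eqref{111}. Frequency localization also gives $\|\bU(t)\|_{H^s}+\|\bB(t)\|_{H^s}\lesssim e^{-ct}\|\widehat{\bv}_0\|_{L^2_\xi}$ and, via $\|\nabla\bU\|_{L^\infty}\lesssim\|\widehat{\nabla\bU}\|_{L^1}$, the key time-integrated bound $\int_0^\infty(\|\nabla\bU\|_{L^\infty}+\|\nabla\bB\|_{L^\infty})\,d\tau\lesssim\|\widehat{\bv}_0\|_{L^1_{\bm\xi}}$.

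Next I would set $\bu=\bU+\mathbf{w}$, $\bb=\bB+\mathbf{h}$. Subtracting the linear equations for $\bU,\bB$, the remainder $(\mathbf{w},\mathbf{h})$ satisfies a system with data $(\mathbf{w},\mathbf{h})|_{t=0}=(\bu_{01},\bb_{01})$, the same nonlinearities as in \eqref{HMHD}, extra linear terms carrying the coefficients $\bU,\bB$ (such as $\bU\cdot\nabla\mathbf{w}$, $\mathbf{w}\cdot\nabla\bU$, $\nabla\times((\nabla\times\mathbf{h})\times\bB)$ and $\nabla\times((\nabla\times\bB)\times\mathbf{h})$), and the forcing $(\bF,\bG)$. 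After the Leray projection this is a system of exactly the type treated in Theorem~\ref{thm1}, and $(\mathbf{w},\mathbf{h})$ stays divergence free.

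I would then run the energy argument of Theorem~\ref{thm1}, using the Littlewood-Paley relocation of derivatives from Section 3 that handles the Hall nonlinearity without the $\tfrac12$-order loss of the magnetic field, now applied to $(\mathbf{w},\mathbf{h})$. The only genuinely new contributions are the $\bU,\bB$-coefficient linear terms and the forcing. Estimating the former by $(\|\nabla\bU\|_{L^\infty}+\|\nabla\bB\|_{L^\infty})\,(\|\mathbf{w}\|_{H^s}^2+\|\mathbf{h}\|_{H^s}^2)$ --- for the cross and Hall terms again relocating derivatives so that only first-order derivatives of $\bB$ survive and nothing is lost against the dissipation $\|\Lambda^{1/2}\mathbf{h}\|_{H^s}^2$ --- and the forcing by $\|\bF\|_{H^s}+\|\bG\|_{H^s}$, I obtain $\tfrac{d}{dt}E+D\lesssim(\|\nabla\bU\|_{L^\infty}+\|\nabla\bB\|_{L^\infty})\,E+(\|\bF\|_{H^s}+\|\bG\|_{H^s})\,E^{1/2}+E^{3/2}$, where $E=\|\mathbf{w}\|_{H^s}^2+\|\mathbf{h}\|_{H^s}^2$ and $D$ is the full dissipation. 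Because the coefficient is time-integrable with integral $\lesssim\|\widehat{\bv}_0\|_{L^1_{\bm\xi}}$, Gronwall yields precisely the factor $\exp(C\|\widehat{\bv}_0\|_{L^1_{\bm\xi}})$ appearing in \eqref{111}, and the source terms assemble into the bracketed quantity there; a continuity/bootstrap argument then closes once the left-hand side of \eqref{111} is $\le\delta$ with $\delta,\varepsilon$ small. This gives a global $(\mathbf{w},\mathbf{h})$, hence the global solution $(\bu,\bb)=(\bU+\mathbf{w},\bB+\mathbf{h})$; uniqueness follows by applying the same estimate to the difference of two solutions.

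The principal obstacle is, as in Theorem~\ref{thm1}, the magnetic Hall nonlinearity in the perturbed equation: the terms $\nabla\times((\nabla\times\mathbf{h})\times\bB)$ and $\nabla\times((\nabla\times\mathbf{h})\times\mathbf{h})$ threaten a $\tfrac12$-derivative loss against the weak dissipation $\|\Lambda^{1/2}\mathbf{h}\|_{H^s}^2$, now aggravated by the potentially large amplitude of $\bB$. What rescues the estimate is that this large amplitude enters the Gronwall exponent only through $\int_0^\infty\|\nabla\bB\|_{L^\infty}\,d\tau\lesssim\|\widehat{\bv}_0\|_{L^1_{\bm\xi}}\lesssim\varepsilon^{1/2}\|\widehat{\bv}_0\|_{L^2_\xi}$, which is small on the thin annulus even when the energy $\|\bv_0\|_{H^s}\simeq\|\widehat{\bv}_0\|_{L^2_\xi}$ is large; together with the Littlewood-Paley relocation of derivatives this keeps the Gronwall constant finite and permits the bootstrap to close.
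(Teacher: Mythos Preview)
Your proposal is correct and follows essentially the same approach as the paper: define the linear reference flows $\bU,\bB$, write the perturbation system for $(\mathbf{w},\mathbf{h})=(\bu-\bU,\bb-\bB)$, reuse the Littlewood--Paley commutator machinery of Theorem~\ref{thm1} on the self-interactions, and close via Gr\"onwall with exponent $\exp(C\|\widehat{\bv}_0\|_{L^1_{\bm\xi}})$ and the $(\varepsilon^{s+1/2}+\varepsilon)$ forcing bounds. One point worth sharpening: in the paper the mixed Hall term $\nabla\times((\nabla\times\mathbf{h})\times\bB)$ is not estimated by $\|\nabla\bB\|_{L^\infty}\,E$ and fed into Gr\"onwall, but rather produces a contribution of the form $\|\bB\|_{W^{s+3/2,\infty}}\int_0^t\|\Lambda^{1/2}\mathbf{h}\|_{H^s}^2\,\mathrm{d}\tau$ that is absorbed directly into the left-hand dissipation using $\|\bB\|_{W^{s+3/2,\infty}}\lesssim\|\widehat{\bv}_0\|_{L^1_{\bm\xi}}\le\delta$; your parenthetical ``nothing is lost against the dissipation'' suggests you see this, but your stated differential inequality should carry a $(\text{small})\cdot D$ term on the right rather than folding everything into the $E$ coefficient.
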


\begin{remark}
To introduce initial data with large $H^s$ energy, we follow the idea of \cite{LLZ}. Let $\mathbf{n}(\bm\xi)$ be a smooth vectorfield satisfying $\bm\xi \cdot \mathbf{n}(\bm\xi)=0$ and $|\mathbf{n}(\bm\xi)|=1$. For $0< \varepsilon <1$, let $\psi$ be a smooth function in $\mathcal{S}(\mathbb{R}^3)$ with $\mathrm{supp}\, \psi \ \subseteq \{1-\varepsilon < |\bm\xi|< 1+\varepsilon \}$, where $\mathcal{S}(\mathbb{R}^3)$ denotes the Schwartz space. Let
\begin{equation*}
  \mathbf{g}(\x)=\int_{1-\varepsilon<|\bm\xi|<1+\varepsilon} \big(\mathbf{n}(\bm\xi)\sin(\x\cdot \bm\xi)+|\bm\xi|^{-1}\bm\xi \times \mathbf{n}(\bm\xi)\cos(\x\cdot \bm\xi) \big) \psi(\bm\xi)\mathrm{d}\bm\xi.
\end{equation*}
Then it can be verified (c.f.\,\cite{LLZ}) that
$$
\nabla \cdot \mathbf{g}=0, \quad  \nabla \times \mathbf{g}= \sqrt{-\Delta}\,\mathbf{g}.
$$
Define
\begin{equation*}
   \bv_0=\varepsilon^{-\frac{3}{2}}\log (\varepsilon^{-1}) \mathbf{g}.
\end{equation*}
By direct calculations, we can show that
$$
\begin{aligned}
&\qquad\widehat{\bv}_0=\varepsilon^{-\frac{3}{2}}\log (\varepsilon^{-1}) |\bm\xi|^{-1} (\bm\xi \times \mathbf{n}(\bm\xi))\psi(\bm\xi),\\
&\|{\widehat \bv}_0\|_{L^2_{\bm\xi}} \simeq \varepsilon^{-\frac{1}{2}}\log (\varepsilon^{-1}),
 \quad \|{\widehat \bv}_0\|_{L^1_{\bm\xi}} \simeq \varepsilon^{\frac{1}{2}} \log (\varepsilon^{-1}).
 \end{aligned}
$$
Hence, the quantity involving $\widehat{\bv}_0$ on the left-hand side of \eqref{111} is quantitatively equivalent to
\begin{equation*}
\big[\big(\varepsilon^{s+\frac{1}{2}} +\varepsilon\big) (\log(\varepsilon^{-1}))^2 + \varepsilon^{\frac{1}{2}} \log (\varepsilon^{-1})\big] \big(1+\exp \big\{ \varepsilon^{\frac{1}{2}} \log (\varepsilon^{-1}) \big\}\big).
\end{equation*}
Thus, \eqref{111} can be realized by choosing $\varepsilon$ and $\|(\bu_{01},\bb_{01})\|^2_{H^{s}}$ to be sufficiently small and taking for example $\delta \simeq \varepsilon^{\frac13}$. In this case, $\|(\bu_0,\bb_0)\|_{H^s}$ can be arbitrarily large, since $\|{\widehat \bv}_0\|_{L^2_{\bm\xi}} \simeq \varepsilon^{-\frac{1}{2}}\log (\varepsilon^{-1})$.
\end{remark}

\begin{remark}
In the limiting case when $\varepsilon=0$, it was pointed out in \cite{LLZ} that the function $\mathbf{g}$ satisfies $\nabla\cdot\mathbf{g}=0$ and $\nabla\times \mathbf{g}=\mathbf{g}$, i.e., $\mathbf{g}$ is a Beltrami flow and so is $\bv_0$. In this case, $\bu_{02}=\alpha_1\bv_0$ is a Beltrami flow and $\bb_{02}=\alpha_2\bv_0$ is called a force-free field. Thus, our initial data can be viewed as small perturbations near a potentially large Beltrami flow and a potentially large force-free field.
\end{remark}

We prove Theorem \ref{thm1} by utilizing standard $L^p$-based energy methods and Littlewood-Paley decomposition. A special ingredient is transferring derivatives in nonlinear terms, which is inspired by \cite{CWW}. The proof of Theorem \ref{thm2} relies on analyzing the nonlinear structure of the helicity and carefully crafting the nonlinear smallness of initial data.

The rest of the paper is organized as follows. In Section 2, we introduce and prove some technical lemmas which are utilized in the proofs of the main results. We then prove Theorem \ref{thm1} and Theorem \ref{thm2} in Section 3 and Section 4, respectively.

\section{Preliminaries}

We first introduce some commutator estimates.
\begin{Lemma}\label{san}
Let $\bH,\bX,\bW$ be divergence free vectorfields in $\mathbb{R}^3$. For $j\ge-1$, let
\begin{equation*}
I=\int_{\mathbb{R}^3} [{\Delta}_j, \bH \cdot \nabla]\bX \cdot {\Delta}_j \bW \mathrm{d}\x.
\end{equation*}
Then it holds that
\begin{align}\label{q}
  |I| \lesssim \ &\|\nabla \bH\|_{L^\infty} \|{\Delta}_j \bX\|_{L^2}\|{\Delta}_j \bW\|_{L^2}+\|{\Delta}_j \bH\|_{L^\infty} \|\nabla \bX\|_{L^2}\|{\Delta}_j \bW\|_{L^2} \nonumber\\
  & +\|\nabla \bH\|_{L^\infty}\|{\Delta}_j \bW\|_{L^2} {\sum_{k \geq j-1}} 2^{j-k} \|{{\Delta}}_k \bX\|_{L^2}.
\end{align}
\end{Lemma}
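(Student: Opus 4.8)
The plan is to establish \eqref{q} through Bony's paraproduct decomposition, localizing $\bH\cdot\nabla\bX$ in frequency and isolating the three interactions that generate the three terms on the right-hand side. Writing $S_{k-1}=\sum_{k'\le k-2}\Delta_{k'}$ for the low-frequency cutoff and $\widetilde{\Delta}_k=\Delta_{k-1}+\Delta_k+\Delta_{k+1}$, I decompose
$$
\bH\cdot\nabla\bX=\sum_k S_{k-1}\bH\cdot\nabla\Delta_k\bX+\sum_k\Delta_k\bH\cdot\nabla S_{k-1}\bX+\sum_k\Delta_k\bH\cdot\nabla\widetilde{\Delta}_k\bX,
$$
apply $\Delta_j$, and subtract $\bH\cdot\nabla\Delta_j\bX$. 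By the support properties of the Littlewood-Paley blocks, only the finitely many $k$ with $|k-j|\le4$ survive in the first two (paraproduct) sums, whereas in the remainder sum only $k\ge j-N$ for a fixed $N$ contribute. Grouping the diagonal low-high pieces with the subtracted term $\bH\cdot\nabla\Delta_j\bX$ turns that part into genuine commutators $\sum_{|k-j|\le4}[\Delta_j,S_{k-1}\bH\cdot\nabla]\Delta_k\bX$ (plus paraproduct remainders of the same type), while the high-low paraproduct and the high-high remainder are estimated directly after pairing with $\Delta_j\bW$.

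For the low-high commutator I use the kernel representation $\Delta_j f(\x)=\int h_j(\mathbf{y})f(\x-\mathbf{y})\,\mathrm{d}\mathbf{y}$ with $h_j(\mathbf{y})=2^{3j}h(2^j\mathbf{y})$, which gives
$$
[\Delta_j,S_{k-1}\bH\cdot\nabla]\Delta_k\bX(\x)=\int h_j(\mathbf{y})\big(S_{k-1}\bH(\x-\mathbf{y})-S_{k-1}\bH(\x)\big)\cdot(\nabla\Delta_k\bX)(\x-\mathbf{y})\,\mathrm{d}\mathbf{y}.
$$
The mean value theorem yields $|S_{k-1}\bH(\x-\mathbf{y})-S_{k-1}\bH(\x)|\le|\mathbf{y}|\,\|\nabla\bH\|_{L^\infty}$, while $\int|\mathbf{y}|\,|h_j(\mathbf{y})|\,\mathrm{d}\mathbf{y}\lesssim2^{-j}$ and $\|\nabla\Delta_k\bX\|_{L^2}\lesssim2^k\|\Delta_k\bX\|_{L^2}$ by Bernstein's inequality. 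Since $2^{k-j}\simeq1$ for $|k-j|\le4$, summing the finitely many comparable blocks and pairing with $\Delta_j\bW$ produces the first term $\|\nabla\bH\|_{L^\infty}\|\Delta_j\bX\|_{L^2}\|\Delta_j\bW\|_{L^2}$. The high-low paraproduct $\Delta_j(\Delta_k\bH\cdot\nabla S_{k-1}\bX)$, $|k-j|\le4$, is bounded directly by $\|\Delta_k\bH\|_{L^\infty}\|\nabla S_{k-1}\bX\|_{L^2}\lesssim\|\Delta_k\bH\|_{L^\infty}\|\nabla\bX\|_{L^2}$, which after pairing with $\Delta_j\bW$ and absorbing the finite sum over $k\simeq j$ into the generic constant gives the second term.

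The remainder is where the divergence-free hypothesis on $\bH$ is essential, and I expect it to be the main obstacle. After pairing with $\Delta_j\bW$ and using the self-adjointness of $\Delta_j$, a representative term is $\int\widetilde{\Delta}_k\bH\cdot\nabla\Delta_k\bX\cdot\Delta_j\bW\,\mathrm{d}\x$ with $k\ge j-1$; here both high-frequency factors carry frequency $\simeq2^k$ while $\Delta_j\bW$ is the low-frequency factor. Since $\nabla\cdot\widetilde{\Delta}_k\bH=\widetilde{\Delta}_k(\nabla\cdot\bH)=0$, I rewrite $\widetilde{\Delta}_k\bH\cdot\nabla\Delta_k\bX=\nabla\cdot(\widetilde{\Delta}_k\bH\otimes\Delta_k\bX)$ and integrate by parts, transferring the derivative onto the low-frequency factor:
$$
\Big|\int\widetilde{\Delta}_k\bH\cdot\nabla\Delta_k\bX\cdot\Delta_j\bW\,\mathrm{d}\x\Big|=\Big|\int(\widetilde{\Delta}_k\bH\otimes\Delta_k\bX):\nabla\Delta_j\bW\,\mathrm{d}\x\Big|.
$$
Hölder's inequality together with $\|\widetilde{\Delta}_k\bH\|_{L^\infty}\lesssim2^{-k}\|\nabla\bH\|_{L^\infty}$ (Bernstein plus the uniform $L^\infty$-bound of the projector) and $\|\nabla\Delta_j\bW\|_{L^2}\lesssim2^j\|\Delta_j\bW\|_{L^2}$ then produces the gain $2^{j-k}$, and summing over $k\ge j-1$ delivers the third term. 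The crux is exactly this integration by parts: without $\nabla\cdot\bH=0$ the derivative would remain on the high-frequency factor $\Delta_k\bX$, no summable factor $2^{j-k}$ could be extracted, and the borderline high-high interaction would fail to converge. Thus the divergence-free structure is precisely what tames the remainder.
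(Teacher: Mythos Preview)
Your proposal is correct and follows essentially the same route as the paper: Bony paraproduct decomposition into three pieces, a kernel/mean-value commutator estimate for the low-high piece, a direct H\"older bound for the high-low piece, and the divergence-free structure combined with Bernstein's inequality to extract the gain $2^{j-k}$ in the remainder. The only cosmetic difference lies in the remainder: you integrate by parts to transfer the gradient onto the low-frequency factor $\Delta_j\bW$, whereas the paper rewrites $[\Delta_j,\Delta_k\bH\cdot\nabla]\tilde{\Delta}_k\bX=\nabla\cdot[\Delta_j,\Delta_k\bH]\tilde{\Delta}_k\bX$ and applies Bernstein at frequency $2^j$ directly to its $L^2$ norm; both mechanisms produce the same factor $2^{j-k}$.
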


\begin{proof}
By phase decomposition, we first write $I$ as $I=I_{1}+I_{2}+I_{3}$, where
\begin{equation*}
  \begin{split}
  &I_{1}=\sum_{|k-j|\leq 2}\int_{\mathbb{R}^3} \left( {\Delta}_j(S_{k-1}\bH \cdot \nabla {\Delta}_k \bX )  - S_{k-1}\bH \cdot \nabla {\Delta}_j{\Delta}_k \bX \right) \cdot {\Delta}_j \bW \mathrm{d}\x,
  \\
  &I_{2}=\sum_{|k-j|\leq 2}\int_{\mathbb{R}^3} \left( {\Delta}_j({\Delta}_k \bH \cdot \nabla S_{k-1} \bX )  - {\Delta}_k \bH \cdot \nabla {\Delta}_jS_{k-1}\bX \right) \cdot {\Delta}_j \bW \mathrm{d}\x,
  \\
  &I_{3}=\sum_{k\geq j-1}\int_{\mathbb{R}^3} \big( {\Delta}_j({\Delta}_k \bH \cdot \nabla{\tilde{\Delta}}_k \bX )  - {\Delta}_k \bH \cdot \nabla \Delta_j{\tilde{\Delta}}_k \bX \big) \cdot {\Delta}_j \bW \mathrm{d}\x,
  \end{split}
\end{equation*}
where $S_k:={\sum}_{k' \leq k-1}{\Delta}_{k'}$ and ${\tilde{\Delta}}_k:={\Delta}_{k-1}+{\Delta}_{k}+{\Delta}_{k+1}$. Using H\"older's inequality and commutator estimate, we can show that
\begin{align}\label{I1}
  |I_{1}|
   &\lesssim {\sum_{|k-j|\leq 2}} \|\nabla S_{k-1}\bH\|_{L^\infty}\| \Delta_k \bX\|_{L^2}\| \Delta_j \bW\|_{L^2}\nonumber
   \\
   &\lesssim \|\nabla \bH\|_{L^\infty}\| \Delta_j \bX\|_{L^2}\| \Delta_j \bW\|_{L^2}.
\end{align}
Similarly, we can show that
\begin{align}\label{I2}
  |I_{2}| &\lesssim {\sum_{|k-j|\leq 2}} \|\nabla S_{k-1}\bX\|_{L^2}\| \Delta_k \bH\|_{L^\infty}\| \Delta_j \bW\|_{L^2}\nonumber
  \\
  &\lesssim \|\nabla \bX\|_{L^2}\| \Delta_j \bH\|_{L^\infty}\| \Delta_j \bW\|_{L^2}.
\end{align}
Since
\begin{equation*}
  I_{3}=\sum_{k \geq j-1} \int_{\mathbb{R}^3} [\Delta_j, \Delta_k \bH \cdot \nabla]\tilde{\Delta}_k \bX \cdot \Delta_j \bW \mathrm{d}\x,
\end{equation*}
it holds that
\begin{equation}\label{4}
  |I_{3}| \leq {\sum_{k \geq j-1}} \big\| [\Delta_j, \Delta_k \bH \cdot \nabla]\tilde{\Delta}_k \bX \big\|_{L^2} \|\Delta_j \bW \|_{L^2}.
\end{equation}
Using $\nabla \cdot \bH=0$ and Bernstein's inequality, we deduce
\begin{align}\label{3}
  {\sum_{k \geq j-1}}\big\| [\Delta_j, \Delta_k \bH \cdot \nabla]\tilde{\Delta}_k \bX \big\|_{L^2} &\leq {\sum_{k \geq j-1}} \big\| \nabla \cdot [\Delta_j,\Delta_k \bH]\tilde{\Delta}_k \bX\big\|_{L^2}\nonumber
  \\
  & \lesssim {\sum_{k \geq j-1}} 2^j \|\Delta_k \bH\|_{L^\infty} \|\tilde{\Delta}_k \bX\|_{L^2}\nonumber
  \\
   & \lesssim {\sum_{k \geq j-1}} 2^{k-j} \|\nabla \bH\|_{L^\infty} \|{\Delta}_k \bX\|_{L^2}.
\end{align}
Substituting \eqref{3} into \eqref{4}, we have
\begin{equation}\label{I3}
  |I_{3}| \lesssim \|\nabla \bH\|_{L^\infty} \|\Delta_j \bW\|_{L^2} {\sum_{k \geq j-1}} 2^{k-j}\|{\Delta}_k \bX\|_{L^2}.
\end{equation}
Combining \eqref{I1}, \eqref{I2}, and \eqref{I3}, we arrive at \eqref{q}.
\end{proof}

The second lemma is utilized in the proof of Theorem \ref{thm2}, which provides estimate of the linearized flow associated with \eqref{HMHD}. To state the lemma, we let $\bU$ be the solution of
\begin{equation}\label{U}
\begin{cases}
\bU_t+ (-\Delta)^\alpha \bU=\mathbf{0},\\
\bU(\x,0)=\alpha_1 \bv_0,
\end{cases}
\end{equation}
and let $\bB$ be the solution of
\begin{equation}\label{B}
\begin{cases}
\bB_t+ (-\Delta)^{\frac{1}{2}} \bB=\mathbf{0},\\
\bB(\x,0)=\alpha_2 \bv_0,
\end{cases}
\end{equation}
where $\bv_0$ is the same function as specified in Theorem \ref{thm2}. Then we have

\begin{Lemma}\label{L1}
Let $\bU$ and $\bB$ be solutions to \eqref{U} and \eqref{B}, respectively. Then the following estimates
\begin{equation}\label{U1}
\begin{split}
 \| {\bU}\times (\nabla \times {\bU})\|_{H^{s+\frac{1}{2}}}+\| {\bB}\times (\nabla \times {\bB})\|_{H^{s+\frac{1}{2}}} \lesssim \varepsilon^{s+\frac{1}{2}} ( e^{- \frac{ t}{2^{2\alpha}}}+ e^{-\frac{ t}{2}  } )^2\|\widehat{\bv}_0\|_{L^2_{\bm\xi}}\|\widehat{\bv}_0\|_{L^1_{\bm\xi}}
 \end{split}
\end{equation}
and
\begin{equation}\label{UB}
\begin{split}
&\int^\infty_0 \|\nabla \times (\bU \times \bB) \|_{H^{s+\frac{1}{2}}} (t)\mathrm{d}t  \lesssim \varepsilon\|\widehat{\bv}_0\|_{L^2_{\bm\xi}}\|\widehat{\bv}_0\|_{L^1_{\bm\xi}}
\end{split}
\end{equation}
hold.
\end{Lemma}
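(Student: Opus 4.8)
The plan is to solve the two linear fractional heat equations explicitly on the Fourier side and then exploit the algebraic structure of $\bv_0$. Taking Fourier transforms in \eqref{U} and \eqref{B} gives
\begin{equation*}
\widehat{\bU}(\bm\xi,t)=\alpha_1\, e^{-|\bm\xi|^{2\alpha}t}\,\widehat{\bv}_0(\bm\xi),\qquad \widehat{\bB}(\bm\xi,t)=\alpha_2\, e^{-|\bm\xi|t}\,\widehat{\bv}_0(\bm\xi).
\end{equation*}
Since $\mathrm{supp}\,\widehat{\bv}_0\subseteq\{1-\varepsilon\le|\bm\xi|\le 1+\varepsilon\}$ and $\varepsilon\le\frac12$, on this support $|\bm\xi|\ge\frac12$, so $e^{-|\bm\xi|^{2\alpha}t}\le e^{-t/2^{2\alpha}}$ and $e^{-|\bm\xi|t}\le e^{-t/2}$. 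This is the origin of the decay factors in \eqref{U1}, the square arising because each quadratic expression carries two such exponentials.

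Next I would record that the Beltrami-type relation $\nabla\times\bv_0=\sqrt{-\Delta}\,\bv_0$ is preserved by both linear flows: the scalar multipliers $e^{-|\bm\xi|^{2\alpha}t}$ and $e^{-|\bm\xi|t}$ commute with $\nabla\times$ and with $\Lambda=\sqrt{-\Delta}$, so $\nabla\times\bU=\Lambda\bU$ and $\nabla\times\bB=\Lambda\bB$. Hence $\bU\times(\nabla\times\bU)=\bU\times\Lambda\bU$, and likewise for $\bB$. The crucial simplification is the pointwise identity $\bU\times\bU=\mathbf 0$: writing $\Lambda=I+(\Lambda-I)$ yields
\begin{equation*}
\bU\times\Lambda\bU=\bU\times(\Lambda-I)\bU,
\end{equation*}
and on the annular support the multiplier of $(\Lambda-I)$ equals $|\bm\xi|-1\in[-\varepsilon,\varepsilon]$, which is the source of the smallness in $\varepsilon$. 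Equivalently, the self–dot–product of a Beltrami field vanishes in frequency, $\widehat{\bv}_0(\bm\xi)\cdot\widehat{\bv}_0(\bm\xi)=0$, which cancels the top-order frequencies of the product.

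To obtain \eqref{U1} I would decompose $\bU\times(\Lambda-I)\bU$ by Littlewood--Paley and estimate each dyadic block. Both factors are frequency-localized in $\{|\bm\xi|\sim1\}$; the aim is to show that, after the cancellation above, the output is effectively supported at low frequencies $\lesssim\varepsilon$, so that the weights $2^{j(s+\frac12)}$ contribute the factor $\varepsilon^{s+\frac12}$, while Young's inequality $\|\,|\widehat{\bv}_0|*|\widehat{\bv}_0|\,\|_{L^2}\le\|\widehat{\bv}_0\|_{L^1}\|\widehat{\bv}_0\|_{L^2}$ supplies the two norms on the right. For \eqref{UB} I would use $\nabla\times(\bU\times\bB)=(\bB\cdot\nabla)\bU-(\bU\cdot\nabla)\bB$ (valid since $\nabla\cdot\bU=\nabla\cdot\bB=0$); the outer curl gives one bounded factor of $\bm\xi$, the Beltrami/annulus structure again supplies a factor $\varepsilon$, and the product of exponentials $e^{-(|\bm\xi|^{2\alpha}+|\bm\xi|)t}$ is integrable in $t$ with $\int_0^\infty e^{-ct}\,dt=O(1)$, since the exponent is bounded below by $2^{-2\alpha}+\frac12>0$ on the support; this produces the time-integrated bound $\varepsilon\|\widehat{\bv}_0\|_{L^2}\|\widehat{\bv}_0\|_{L^1}$.

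The main obstacle is the \emph{sharp} exponent $\varepsilon^{s+\frac12}$ in \eqref{U1}. Using only $\bU\times\bU=\mathbf 0$ and Young's inequality yields a single power of $\varepsilon$ (from $|\bm\xi|-1$), because the convolution of two functions supported in the shell has Fourier support in the whole ball $\{|\bm\xi|\le 2+2\varepsilon\}$, where the Sobolev weight is merely $O(1)$; one checks in fact that output frequencies of size $\sim1$ are attainable (e.g.\ $\bm\eta,\bm\zeta$ of length one at $120^\circ$). To upgrade $\varepsilon$ to $\varepsilon^{s+\frac12}$ one must prove that these mid- and high-frequency contributions genuinely cancel, i.e.\ that $\bU\times(\Lambda-I)\bU$ is essentially supported in $\{|\bm\xi|\lesssim\varepsilon\}$. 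This requires exploiting the full vector geometry of the polarization $\mathbf n(\bm\xi)$ on the thin shell inside the convolution integral, not merely the scalar relation $\widehat{\bv}_0\cdot\widehat{\bv}_0=0$, and controlling that cancellation uniformly in the output frequency is the delicate step.
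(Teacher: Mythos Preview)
For \eqref{U1} you begin exactly as the paper does---write $\bU\times(\nabla\times\bU)=\bU\times(\Lambda-I)\bU$ via the Beltrami relation and $\bU\times\bU=\mathbf{0}$---but then you diverge into an unnecessary complication. The paper does \emph{not} attempt to localize the output frequency; it simply applies the product estimate
\[
\|\bU\times(\Lambda-I)\bU\|_{H^{s+\frac12}}\lesssim \|\bU\|_{W^{s+\frac12,\infty}}\,\|(\Lambda-I)\bU\|_{H^{s+\frac12}}
\]
and bounds each factor separately, using only that $\widehat{\bU}$ (not the product) is supported in the thin annulus. The first factor contributes the exponential decay and $\|\widehat{\bv}_0\|_{L^1_{\bm\xi}}$; the second contributes the $\varepsilon$-smallness and $\|\widehat{\bv}_0\|_{L^2_{\bm\xi}}$. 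Since the Sobolev weight is $O(1)$ on the ball $\{|\bm\xi|\le 2+2\varepsilon\}$, no cancellation of ``mid-frequency'' output is required, and your proposed delicate step of forcing the output into $\{|\bm\xi|\lesssim\varepsilon\}$ is not the route the paper takes. (Your observation that this naive route yields only one power of $\varepsilon$ is in fact correct; the paper extracts its stated exponent directly from the multiplier $|\bm\xi|-1$ in $\|(\Lambda-I)\bU\|_{H^{s+\frac12}}$.)

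For \eqref{UB} there is a genuine gap in your proposal. The identity $\nabla\times(\bU\times\bB)=(\bB\cdot\nabla)\bU-(\bU\cdot\nabla)\bB$ does not by itself produce a factor of $\varepsilon$: each term is $O(1)$ on the annulus and the cancellation between them is invisible in physical space. The paper's mechanism is a symmetrization on the Fourier side. Writing the convolution for $\widehat{\bU\times\bB}$ in two ways (once with $\bU$ on the left variable, once on the right) and using $\widehat{\bv}_0(\bm\eta)\times\widehat{\bv}_0(\bm\xi-\bm\eta)=-\widehat{\bv}_0(\bm\xi-\bm\eta)\times\widehat{\bv}_0(\bm\eta)$ gives
\[
\widehat{\bU\times\bB}(\bm\xi)=\tfrac12\alpha_1\alpha_2\int_{\mathbb R^3}\Bigl(e^{-t(|\bm\xi-\bm\eta|^{2\alpha}+|\bm\eta|)}-e^{-t(|\bm\xi-\bm\eta|+|\bm\eta|^{2\alpha})}\Bigr)\,\widehat{\bv}_0(\bm\xi-\bm\eta)\times\widehat{\bv}_0(\bm\eta)\,\mathrm d\bm\eta.
\]
On the support of the integrand both $|\bm\eta|$ and $|\bm\xi-\bm\eta|$ lie in $[1-\varepsilon,1+\varepsilon]$, so the difference of exponentials is $O(\varepsilon\,e^{-ct})$ by the mean value theorem; Young's inequality and integration in $t$ then give \eqref{UB}. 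This symmetrization is the idea you are missing.
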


\begin{proof}
First of all, note that since
\begin{align*}
& \nabla \cdot \bv_0=0, \qquad  \ \nabla \times \bv_0=\sqrt{-\Delta}\bv_0,\\
& \bU=\alpha_1 e^{- t (-\Delta)^\alpha}\bv_{0}, \quad \bB=\alpha_2e^{- t (-\Delta)^{\frac{1}{2}}}\bv_{0},
\end{align*}
then $\bU$ and $\bB$ satisfy the following properties:
\begin{align*}
&\nabla \cdot \bU=0, \quad \quad \nabla \times \bU=\sqrt{-\Delta}\, \bU,\\
& \nabla \cdot \bB=0, \quad \quad \nabla \times \bB=\sqrt{-\Delta}\, \bB.
\end{align*}
To prove \eqref{U1}, we choose a $C^\infty(\mathbb{R}^3)$ cut-off function $\gamma(\bm\xi)$ such that $\gamma\equiv1$ on the support of $\bv_0$, and $\gamma(\bm\xi)\equiv0$ if $|\bm\xi|\geq 1+2\varepsilon$ or $|\bm\xi|\leq 1-2\varepsilon$. Then we have
\begin{align}\label{UB1}
  &\bU(\x,t)=\alpha_1{\mathcal{F}}^{-1}\left(e^{- |\bm\xi|^{2\alpha} t}\gamma(\bm\xi)\right) \ast \bv_0,\nonumber \\
  &\bB(\x,t)=\alpha_2{\mathcal{F}}^{-1}\left(e^{-|\bm\xi|t}\gamma(\bm\xi)\right) \ast \bv_0,
\end{align}
where $\mathcal{F}^{-1}(\cdot)$ denotes the inverse Fourier transform. Since $\bU\times \bU=\mathbf{0}$ and $\bB \times \bB=\mathbf{0}$, then we have
\begin{align*}
&\|\bU \times (\nabla \times \bU)\|_{H^{s+\frac{1}{2}}} + \|\bB \times (\nabla \times \bB)\|_{H^{s+\frac{1}{2}}}\\
= &\|\bU \times (\nabla \times \bU-\bU)\|_{H^{s+\frac{1}{2}}} + \|\bB \times (\nabla \times \bB-\bB)\|_{H^{s+\frac{1}{2}}}.
\end{align*}
By H\"older's inequality, we obtain
\begin{align}\label{205}
&\|\bU \times (\nabla \times \bU)\|_{H^{s+\frac{1}{2}}} + \|\bB \times (\nabla \times \bB)\|_{H^{s+\frac{1}{2}}} \nonumber\\
\leq  \ & \|\bU\|_{W^{s+\frac{1}{2},\infty}} \|\nabla \times \bU-\bU\|_{H^{s+\frac{1}{2}}}+\|\bB\|_{W^{s+\frac{1}{2},\infty}}\|\nabla \times \bB-\bB\|_{H^{s+\frac{1}{2}}}.
\end{align}
Using the information about the support of $\bU$ and $\bB$ (pertaining to the support of $\bv_0$), we can show that
\begin{equation*}
\begin{split}
\|\bU\|_{W^{s+\frac{1}{2},\infty}}+\|\bB\|_{W^{s+\frac{1}{2},\infty}} & \lesssim \gamma(\bm\xi) \cdot|\bm\xi|^{s+\frac{1}{2}}\cdot(\|\widehat{\bU}\|_{L^1_{\bm\xi}}+\|\widehat{\bB}\|_{L^1_{\bm\xi}})\\
&\lesssim \gamma(\bm\xi)\cdot( \alpha_1e^{- |\bm\xi|^{2\alpha} t}+ \alpha_2 e^{- |\bm\xi| t} ) \cdot |\bm\xi|^{s+\frac{1}{2}}\|\widehat{\bv}_0\|_{L^1_{\bm\xi}}\\
& \lesssim ( e^{- \frac{ t}{2^{2\alpha}}}+ e^{-\frac{ t}{2}  } ) \|\widehat{\bv}_0\|_{L^1_{\bm\xi}}.
\end{split}
\end{equation*}
Similarly, we can show that
\begin{equation*}
\begin{split}
\|\nabla \times \bU-\bU\|_{H^{s+\frac{1}{2}}}+\|\nabla \times \bB-\bB\|_{H^{s+\frac{1}{2}}} &\lesssim \gamma(\bm\xi)\cdot(|\bm\xi|-1)^{s+\frac{1}{2}} (\|\widehat{\bB}\|_{L^2_{\bm\xi}}+\|\widehat{\bU}\|_{L^2_{\bm\xi}})\\
&\lesssim \varepsilon^{s+\frac{1}{2}} ( e^{- \frac{ t}{2^{2\alpha}}}+ e^{-\frac{ t}{2} } )\|\widehat{\bv}_0\|_{L^2_{\bm\xi}}.
  \end{split}
\end{equation*}
Substituting the above inequalities into \eqref{205}, we obtain
\begin{equation}\label{206}
\begin{split}
\|\bU \times (\nabla \times \bU)\|_{H^{s+\frac{1}{2}}} + \|\bB \times (\nabla \times \bB)\|_{H^{s+\frac{1}{2}}}
\lesssim \varepsilon^{s+\frac{1}{2}} ( e^{- \frac{ t}{2^{2\alpha}}}+ e^{-\frac{ t}{2}  } )^2\|\widehat{\bv}_0\|_{L^2_{\bm\xi}}\|\widehat{\bv}_0\|_{L^1_{\bm\xi}}.
\end{split}
\end{equation}
Hence, \eqref{U1} is proved.

To show \eqref{UB}, we observe that since $\text{supp}\,\widehat{\bU \times \bB} \subseteq \text{supp}\, \widehat{\bU}+\text{supp}\, \widehat{\bB}$, it holds that
\begin{equation}\label{q}
  \text{supp}\, \widehat{\bU \times \bB} \subseteq \left\{ \bm\xi: |\bm\xi| \leq 2+2\varepsilon  \right\}, \ 0 < \varepsilon \leq \frac{1}{2}.
\end{equation}
Hence, we have
\begin{equation}\label{208}
\|\nabla \times (\bU \times \bB) \|_{H^{s+\frac{1}{2}}} \lesssim  \|\widehat{\bU \times \bB} \|_{L^2_{\bm\xi}}.
\end{equation}
By a direct calculation, we can show that
\begin{align}\label{f01}
\widehat{\bU \times \bB} & = \alpha_1 \alpha_2 \int_{\mathbb{R}^3} e^{-|\bm\xi-\bm\eta|^{2\alpha} t} \widehat{\bv}_0(\bm\xi-\bm\eta) \times e^{-|\bm\eta| t} \widehat{\bv}_0(\bm\eta)\mathrm{d}\bm\eta
\nonumber\\
&= \alpha_1 \alpha_2 \int_{\mathbb{R}^3} e^{-t(|\bm\xi-\bm\eta|^{2\alpha}+|\bm\eta|) } \widehat{\bv}_0(\bm\xi-\bm\eta) \times \widehat{\bv}_0(\bm\eta)\mathrm{d}\bm\eta.
\end{align}
On the other hand, using the convolution theorem, we can show that
\begin{align}\label{f00}
\widehat{\bU \times \bB} & = \alpha_1 \alpha_2 \int_{\mathbb{R}^3} e^{-|\tilde{\bm\eta}|^{2\alpha} t} \widehat{\bv}_0(\tilde{\bm\eta}) \times e^{-|\bm\xi-\tilde{\bm\eta}| t} \widehat{\bv}_0(\bm\xi-\tilde{\bm\eta})\mathrm{d}\tilde{\bm\eta}.
\nonumber\\
&= \alpha_1 \alpha_2 \int_{\mathbb{R}^3} e^{-t(|\tilde{\bm\eta}|^{2\alpha}+|\bm\xi-\tilde{\bm\eta}| ) } \widehat{\bv}_0(\tilde{\bm\eta}) \times \widehat{\bv}_0(\bm\xi-\tilde{\bm\eta})\mathrm{d}\tilde{\bm\eta}.
\end{align}
Since $\widehat{\bv}_0(\tilde{\bm\eta}) \times \widehat{\bv}_0(\bm\xi-\tilde{\bm{\eta}})=-\widehat{\bv}_0(\bm\xi-\tilde{\bm\eta}) \times \widehat{\bv}_0(\tilde{\bm\eta})$, we obtain from \eqref{f00} that
\begin{align}\label{f000}
\widehat{\bU \times \bB}
&=-\alpha_1 \alpha_2 \int_{\mathbb{R}^3} e^{-t( |\tilde{\bm{\eta}}|^{2\alpha} + |\bm\xi-\tilde{\bm{\eta}}|)} \widehat{\bv}_0(\bm \xi-\tilde{\bm{\eta}}) \times \widehat{\bv}_0(\tilde{\bm{\eta}})\mathrm{d}\tilde{\bm{\eta}} \nonumber\\
&= -\alpha_1 \alpha_2 \int_{\mathbb{R}^3} e^{-t( |{\bm{\eta}}|^{2\alpha} + |\bm\xi-\bm{\eta}|)} \widehat{\bv}_0(\bm \xi-\bm{\eta}) \times \widehat{\bv}_0(\bm{\eta})\mathrm{d}\bm{\eta}.
\end{align}
Adding \eqref{f01} and \eqref{f000}, we obtain
\begin{align}\label{f0}
\widehat{\bU \times \bB} & = \alpha_1 \alpha_2 \int_{\mathbb{R}^3} e^{-|\bm\xi-{\bm\eta} |^{2\alpha} t} \widehat{\bv}_0(\bm\xi-{\bm\eta}) \times e^{-|\tilde{\bm\eta}| t} \widehat{\bv}_0({\bm\eta})\mathrm{d}{\bm\eta}\nonumber \\
&= \frac{1}{2}\alpha_1 \alpha_2 \int_{\mathbb{R}^3} Q(t,\bm\xi,\bm\eta) \widehat{\bv}_0(\bm\xi-\bm\eta) \times  \widehat{\bv}_0(\bm\eta)\mathrm{d}\bm\eta,
\end{align}
where
$$
Q(t,\bm\xi,\bm\eta):= e^{-t(|\bm\xi-\bm\eta|^{2\alpha}+|\bm\eta|) }-e^{-t(|\bm\xi-\bm\eta|+|\bm\eta|^{2\alpha}) }.
$$
Note that within the support of  $\widehat{\bv}_0(\bm\xi-\bm\eta) \times  \widehat{\bv}_0(\bm\eta)$, it holds that $ |\bm\xi-\bm\eta|, |\bm\eta| \in [1-\varepsilon, 1+\varepsilon]$, where $0< \varepsilon < \frac{1}{2}$. Hence, using Taylor's theorem, we can show that
\begin{align}\label{209}
|Q(t,\bm\xi,\bm\eta)|\leq \ & e^{-|\bm\xi-\bm\eta|^{2\alpha} t} \big|e^{-|\bm\eta|t}-e^{-|\bm\xi-\bm\eta|t} \big|+e^{-|\bm\xi-\bm\eta| t} \big|e^{-|\bm\xi-\bm\eta|^{2\alpha} t}-e^{-|\bm\eta|^{2\alpha} t} \big|\nonumber\\
\lesssim \ & t\, e^{- |\bm\xi-\bm\eta|^{2\alpha} t} \big| |\bm\xi-\bm\eta|-|\bm\eta|\big|+C\, t\, e^{- |\bm\xi-\bm\eta| t} \big| |\bm\xi-\bm\eta|^{2\alpha} -|\bm\eta|^{2\alpha}\big|\nonumber\\
\lesssim \ & e^{-\frac{1}{2} |\bm\xi-\bm\eta|^{2\alpha} t} \frac{\big| |\bm\xi-\bm\eta|-|\bm\eta|\big|}{|\bm\xi-\bm\eta|^{2\alpha}}+C e^{-\frac{1}{2} |\bm\xi-\bm\eta| t} \frac{\big| |\bm\xi-\bm\eta|^{2\alpha} -|\bm\eta|^{2\alpha}\big|}{|\bm\xi-\bm\eta|}\nonumber\\
\lesssim \ & (e^{-\frac{1}{2}|\bm\xi-\bm\eta|^{2\alpha} t}  +  e^{-\frac{1}{2} |\bm\xi-\bm\eta| t})\,\varepsilon\nonumber\\
\lesssim \ & (e^{-\frac{1}{2}\cdot {(\frac{1}{2}})^{2\alpha} t}  +  e^{-\frac{1}{2} \cdot {\frac{1}{2}} t})\varepsilon.
\end{align}
Integrating \eqref{208} with respect to time and using \eqref{f0}, we derive that
\begin{equation*}
\begin{split}
\int^\infty_0 \|\nabla \times ( \bU \times \bB) \|_{H^{s+\frac{1}{2}}}\mathrm{d}t &\lesssim \int^\infty_0 \|\widehat{ \bU \times \bB }\|_{L^2_{\bm\xi}}\mathrm{d}t \\
& \lesssim  \frac{1}{2}\alpha_1 \alpha_2 \int^\infty_0 \Big\| \int_{\mathbb{R}^3} Q(t,\bm\xi,\bm\eta) \widehat{\bv}_0(\bm\xi-\bm\eta) \times  \widehat{\bv}_0(\bm\eta)d\bm\eta \Big\|_{L^2_{\bm\xi}}\mathrm{d}t.
\end{split}
\end{equation*}
By Young's inequality and \eqref{209}, we conclude that
\begin{equation*}
\begin{split}
\int^\infty_0 \|\nabla \times ( \bU \times \bB) \|_{H^{s+\frac{1}{2}}}\mathrm{d}t
&\lesssim  \int^\infty_0 \|Q(t, \cdot)\|_{L^\infty} \|\widehat{\bv}_0\|_{L^2_{\bm\xi}}\|\widehat{\bv}_0\|_{L^1_{\bm\xi}}\mathrm{d}t\\
&\lesssim \|\widehat{\bv}_0\|_{L^2_{\bm\xi}}\|\widehat{\bv}_0\|_{L^1_{\bm\xi}} \int^\infty_0\varepsilon \big( (e^{-\frac{1}{2}\cdot {(\frac{1}{2}})^{2\alpha} t}  +  e^{-\frac{1}{2} \cdot {\frac{1}{2}} t} \big) \mathrm{d}t\\
&\lesssim \varepsilon\|\widehat{\bv}_0\|_{L^2_{\bm\xi}}\|\widehat{\bv}_0\|_{L^1_{\bm\xi}}.
\end{split}
\end{equation*}
This completes the proof of Lemma \ref{L1}.
\end{proof}

\section{Proof of Theorem \ref{thm1}}

This section is devoted to the proof of Theorem \ref{thm1}.
\subsection{A prior energy estimates}
Firstly, we will give a prior energy estimates, which is separated into several steps.

{\bf Step 1.} Taking $L^2$ inner product of the equations in \eqref{HMHD} with $(\bu,\bb)$, we can show that
\begin{equation}\label{E0}
  \|\bu(t)\|^2_{L^2}+\|\bb(t)\|^2_{L^2}+ \int^t_0 ( \|\Lambda^{\alpha} \bu(\tau)\|^2_{L^2}+ \|\Lambda^{\frac{1}{2}} \bb(\tau)\|^2_{L^2})\mathrm{d}\tau \leq \|\bu_0\|^2_{L^2}+\|\bb_0\|^2_{L^2}.
\end{equation}

{\bf Step 2.} For $j\ge -1$, taking ${\Delta}_j$ to the equations in \eqref{HMHD}, we see that
\begin{equation*}
\begin{split}
& {\partial}_t{\Delta}_j \bu+ {\Delta}_j (-\Delta)^{\alpha} \bu+ {\Delta}_j(\bu \cdot \nabla \bu) + \nabla {\Delta} _j P={\Delta}_j(\bb\cdot \nabla \bb),\\
& {\partial}_t {\Delta}_j \bb+ {\Delta}_j(\bu \cdot \nabla \bb)+{\Delta}_j \left( \nabla \times \left( \left(\nabla \times \bb \right) \times \bb\right) \right)+  {\Delta}_j (-\Delta)^{\frac{1}{2}}\bb={\Delta}_j(\bb \cdot \nabla \bu).
\end{split}
\end{equation*}
Taking the $L^2$ inner product of the above equations with $({\Delta}_j \bu, {\Delta}_j \bb)$, we have
\begin{equation}\label{000}
\begin{split}
\frac{1}{2} \frac{\mathrm{d}}{\mathrm{d}t} \big( \|{\Delta}_j \bu\|^2_{L^2} + \|{\Delta}_j \bb\|^2_{L^2}  \big) + \frac{1}{2} 2^{2j\alpha} \| {\Delta}_j \bu\|^2_{L^2}+ \frac{1}{2} 2^j \|{\Delta}_j \bb\|^2_{L^2}={\sum_{i=1}^5} K_i,
  \end{split}
\end{equation}
where
\begin{equation*}
\begin{aligned}
& K_1=-\int_{\mathbb{R}^3} [{\Delta}_j, \bu\cdot \nabla]\bu \cdot {\Delta}_j \bu \mathrm{d}\x,   &K&_2=-\int_{\mathbb{R}^3} [{\Delta}_j, \bu\cdot \nabla]\bb \cdot {\Delta}_j \bb \mathrm{d}\x, \\
& K_3=\int_{\mathbb{R}^3} [{\Delta}_j, \bb \cdot \nabla]\bb \cdot {\Delta}_j \bu \mathrm{d}\x,   &K&_4=\int_{\mathbb{R}^3} [{\Delta}_j, \bb\cdot \nabla]\bu \cdot {\Delta}_j \bb \mathrm{d}\x, \\
& K_5=-\int_{\mathbb{R}^3} {\Delta}_j\left( \nabla \times \left( \left(\nabla \times \bb \right) \times \bb \right) \right) \cdot {\Delta}_j \bb \mathrm{d}\x.
  \end{aligned}
\end{equation*}
According to Lemma \ref{san}, it is straightforward to show that
\begin{align}
|K_1| & \lesssim \| {\Delta}_j \bu\|_{L^2} \|\nabla \bu\|_{L^\infty} \big(  \| {\Delta}_j \bu\|_{L^2} + {\sum_{k \geq j-1}} 2^{j-k}\| {{\Delta}}_k \bu\|_{L^2}  \big),\label{K1}\\[2mm]
|K_2| &\lesssim \| {\Delta}_j \bb\|^2_{L^2} \|\nabla \bu\|_{L^\infty} + C \| {\Delta}_j \bb\|_{L^2} \|\nabla \bb\|_{L^\infty}\| {\Delta}_j \bu\|_{L^2}\notag\\
& \quad +\| {\Delta}_j \bb\|_{L^2} \|\nabla \bu\|_{L^\infty} {\sum_{k \geq j-1}} 2^{j-k}\| {\Delta}_k \bb\|_{L^2},\label{K2}\\[2mm]
|K_3| &\lesssim \| {\Delta}_j \bu\|_{L^2} \|\nabla \bb\|_{L^\infty} \big(  \| {\Delta}_j \bb\|_{L^2} + {\sum_{k \geq j-1}} 2^{j-k}\| {\Delta}_k \bb\|_{L^2}  \big),\label{K3}\\[2mm]
|K_4| &\lesssim \| {\Delta}_j \bu\|_{L^2} \|\nabla \bb\|_{L^\infty} \|{\Delta}_j \bb\|_{L^2} + \|\nabla \bu\|_{L^\infty}\|{\Delta}_j \bb\|^2_{L^2}\notag\\
& \quad+ \|\nabla \bb\|_{L^\infty}\| {\Delta}_j \bb\|_{L^2} {\sum_{k \geq j-1}} 2^{j-k}\| {\Delta}_k \bu\|_{L^2}.\label{K4}
\end{align}
The rest of the poof is largely devoted to the estimate of $K_5$.

{\bf Step 3.} Using the fact $\bb \times ( \nabla \times \bb)=\frac{1}{2} \nabla(\bb \cdot \bb)-(\bb \cdot \nabla)\bb$, we write $K_5=K_5^{(1)}+K_5^{(2)}$, where
\begin{equation*}
\begin{split}
&K_5^{(1)}=-\int_{\mathbb{R}^3}[{\Delta}_j, \bb \cdot \nabla]\bb \cdot {\Delta}_j (\nabla \times \bb) \mathrm{d}\x,\\
&K_5^{(2)}=-\int_{\mathbb{R}^3} \left\{ {\Delta}_j \frac{1}{2}\nabla (\bb \cdot \bb)-(\bb \cdot \nabla) {\Delta}_j \bb \right\}  \cdot {\Delta}_j (\nabla \times \bb) \mathrm{d}\x.
\end{split}
\end{equation*}
Using Lemma \ref{san}, we first get
\begin{equation}\label{K50}
  |K_5^{(1)}| \lesssim 2^j \| \nabla \bb\|_{L^\infty}\| {\Delta}_j \bb\|_{L^2} \big(  \| {\Delta}_j \bb\|_{L^2} + {\sum_{k \geq j-1}} 2^{j-k}\| {\Delta}_k \bb\|_{L^2}  \big).
\end{equation}
By phase decomposition, we divide $K_5^{(2)}$ into
\begin{equation*}
\begin{split}
&K^{(2)}_{51}=\sum_{|k-j|\leq 2} \int_{\mathbb{R}^3} \big\{ {\Delta}_j(\nabla S_{k-1}\bb \cdot {\Delta}_k \bb)-\nabla {\Delta}_j S_{k-1}\bb \cdot {\Delta}_k \bb  \big\} \cdot {\Delta}_j (\nabla \times \bb) \mathrm{d}\x, \\
&K^{(2)}_{52}=\sum_{|k-j|\leq 2} \int_{\mathbb{R}^3} \big\{ {\Delta}_j( S_{k-1}\bb \cdot \nabla{\Delta}_k \bb)- S_{k-1}\bb \cdot \nabla {\Delta}_j {\Delta}_k \bb  \big\} \cdot {\Delta}_j (\nabla \times \bb) \mathrm{d}\x,\\
&K^{(2)}_{53}=\sum_{k\geq j-1} \int_{\mathbb{R}^3} \big\{ {\Delta}_j\nabla( \frac{1}{2} {\Delta}_k \bb \cdot {\tilde{\Delta}}_k \bb)- \nabla {\Delta}_j {\Delta}_k \bb   \cdot {\tilde{\Delta}}_k \bb \big\} \cdot {\Delta}_j (\nabla \times \bb) \mathrm{d}\x.
  \end{split}
\end{equation*}
By H\"older's inequality, we can show that
\begin{align}\label{K51}
|K^{(2)}_{51}| & \lesssim {\sum_{|k-j|\leq 2}} \| {\Delta}_j(\nabla S_{k-1}\bb \cdot {\Delta}_k \bb)-\nabla {\Delta}_j S_{k-1}\bb \cdot {\Delta}_k \bb \|_{L^2} \| {\Delta}_j (\nabla \times \bb)\|_{L^2}\nonumber\\
& \lesssim {\sum_{|k-j|\leq 2}} \| {\Delta}_j (\nabla \times \bb)\|_{L^2}  \left( \| \nabla S_{k-1} \bb \|_{L^\infty}\|{\Delta}_j {\Delta}_k \bb \|_{L^2}+ \| \nabla {\Delta}_j S_{k-1}\bb \|_{L^\infty} \| {\Delta}_k \bb \|_{L^2} \right)\nonumber
\\
& \lesssim 2^j \| \nabla \bb\|_{L^\infty} \|{\Delta}_j \bb \|^2_{L^2}.
\end{align}
By H\"older's inequality and commutator estimates, we have
\begin{align}\label{K52}
|K^{(2)}_{52} | &\leq {\sum_{|k-j|\leq 2}} \|[{\Delta}_j, S_{k-1} \bb \cdot \nabla]{\Delta}_k \bb\|_{L^2}\| {\Delta}_j (\nabla \times \bb)\|_{L^2}\nonumber
\\
& \lesssim {\sum_{|k-j|\leq 2}} \|\nabla S_{k-1} \bb\|_{L^\infty} \|{\Delta}_k \bb \|_{L^2}   \| {\Delta}_j (\nabla \times \bb)\|_{L^2}\nonumber
\\
& \lesssim 2^j \| \nabla \bb\|_{L^\infty} \|{\Delta}_j \bb \|^2_{L^2}.
\end{align}
For $K^{(2)}_{53}$, we calculate
\begin{equation*}
\begin{split}
K^{(2)}_{53}=&{\sum_{k\geq j-1}} \int_{\mathbb{R}^3} \big\{ {\Delta}_j\nabla( \frac{1}{2} {\Delta}_k \bb \cdot {\tilde{\Delta}}_k \bb)- \nabla {\Delta}_j {\Delta}_k \bb   \cdot {\tilde{\Delta}}_k \bb \big\} \cdot {\Delta}_j (\nabla \times \bb) \mathrm{d}\x
\\
=&\frac{1}{2}{\sum_{k\geq j-1}}  \int_{\mathbb{R}^3} \big\{ {\Delta}_j (\nabla {\Delta}_k \bb \cdot {\tilde{\Delta}}_k \bb)- \nabla {\Delta}_j {\Delta}_k \bb   \cdot {\tilde{\Delta}}_k \bb \big\} \cdot {\Delta}_j (\nabla \times \bb) \mathrm{d}\x
\\
&+\frac{1}{2}{\sum_{k\geq j-1}}  \int_{\mathbb{R}^3} \big\{ {\Delta}_j ( {\Delta}_k \bb \cdot \nabla{\tilde{\Delta}}_k \bb)- \nabla {\Delta}_j {\Delta}_k \bb   \cdot {\tilde{\Delta}}_k \bb \big\} \cdot {\Delta}_j (\nabla \times \bb) \mathrm{d}\x
\\
=&\frac{1}{2}{\sum_{k\geq j-1}}  \int_{\mathbb{R}^3} [{\Delta}_j, {\tilde{\Delta}}_k \bb \cdot \nabla] {\Delta}_k \bb\cdot {\Delta}_j (\nabla \times \bb) \mathrm{d}\x
\\
&+\frac{1}{2}{\sum_{k\geq j-1}}  \int_{\mathbb{R}^3} [{\Delta}_j, {{\Delta}}_k \bb \cdot \nabla] {\tilde{\Delta}}_k \bb \cdot {\Delta}_j (\nabla \times \bb) \mathrm{d}\x.
  \end{split}
\end{equation*}
Using H\"older's inequality, we can show that
\begin{align}\label{K532}
K^{(2)}_{53} \lesssim & {\sum_{k\geq j-1}}  \| [{\Delta}_j, {\tilde{\Delta}}_k \bb \cdot \nabla] {\Delta}_k \bb \|_{L^2} \| {\Delta}_j (\nabla \times \bb)\|_{L^2}\nonumber
\\
& + {\sum_{k\geq j-1}}  \| [{\Delta}_j, {{\Delta}}_k \bb \cdot \nabla] {\tilde{\Delta}}_k \bb \|_{L^2} \| {\Delta}_j (\nabla \times \bb)\|_{L^2}.
\end{align}
For the first term on the right-hand side of \eqref{K532}, using $\nabla \cdot {\tilde{\Delta}}_k \bb=0$ and Bernstein's inequality, we can show that
\begin{align}\label{P1}
  {\sum_{k \geq j-1}} \| [{\Delta}_j, {\tilde{\Delta}}_k \bb \cdot \nabla] {\Delta}_k \bb \|_{L^2} &\leq {\sum_{k \geq j-1}} \| \nabla \cdot [\Delta_j,{\tilde{\Delta}}_k \bb]{\Delta}_k \bb \|_{L^2}\nonumber
  \\
  & \lesssim {\sum_{k \geq j-1}} 2^j \|{\tilde{\Delta}}_k \bb \|_{L^\infty} \|{\Delta}_k \bb \|_{L^2}\nonumber
  \\
   & \lesssim  {\sum_{k \geq j-1}} 2^{j-k}\| \nabla \bb\|_{L^\infty} \|{\Delta}_k \bb \|_{L^2}.
\end{align}
Similarly, by using $\nabla \cdot {{\Delta}}_k \bb=0$ and Bernstein's inequality, we have
\begin{equation}\label{P2}
  {\sum_{k\geq j-1}}  \| [{\Delta}_j, {{\Delta}}_k \bb \cdot \nabla] {\tilde{\Delta}}_k \bb \|_{L^2} \lesssim  {\sum_{k \geq j-1}} 2^{j-k}\| \nabla \bb\|_{L^\infty} \|{\Delta}_k \bb \|_{L^2}.
\end{equation}
Substituting \eqref{P1} and \eqref{P2} into \eqref{K532}, we obtain
\begin{equation}\label{K53}
|K^{(2)}_{53}|\lesssim 2^j \| \nabla \bb\|_{L^\infty} \|{\Delta}_j \bb \|_{L^2}  {\sum_{k \geq j-1}} 2^{j-k}\|{\Delta}_k \bb \|_{L^2} .
\end{equation}
Combining \eqref{K51}, \eqref{K52} and \eqref{K53}, we obtain
\begin{equation}\label{K51a}
\begin{split}
| K^{(2)}_{5}| & \lesssim 2^j \| \nabla \bb\|_{L^\infty} \|{\Delta}_j \bb \|_{L^2} \big( \|{\Delta}_j \bb \|_{L^2} +{\sum_{k \geq j-1}} 2^{j-k}\|{\Delta}_k \bb \|_{L^2} \big).
\end{split}
\end{equation}
Combining \eqref{K51a} with \eqref{K50}, we conclude
\begin{equation}\label{K5}
 |K_{5}| \lesssim 2^j \| \nabla \bb\|_{L^\infty} \|{\Delta}_j \bb \|_{L^2}  \big( \|{\Delta}_j \bb \|_{L^2} +{\sum_{k \geq j-1}} 2^{j-k}\|{\Delta}_k \bb \|_{L^2} \big).
\end{equation}

{\bf Step 4.} Substituting the estimates of $K_1,...,K_5$, i.e., \eqref{K1}-\eqref{K4} and \eqref{K5}, into \eqref{000}, we have
\begin{align}\label{E9}
&\frac{\mathrm{d}}{\mathrm{d}t} \left( \| {\Delta}_j \bu\|^2_{L^2}+ \| {\Delta}_j \bb\|^2_{L^2}  \right) +  2^{2j\alpha} \| {\Delta}_j \bu\|^2_{L^2}+  2^j \| {\Delta}_j \bb\|^2_{L^2}\nonumber\\
\lesssim \ & \|(\nabla \bu, \nabla \bb) \|_{L^\infty}\big(  \| {\Delta}_j \bu\|^2_{L^2}+ \| {\Delta}_j \bb\|^2_{L^2}   \big)\nonumber\\
& + \|(\nabla \bu, \nabla \bb) \|_{L^\infty} \big[ \big( {\sum_{k \geq j-1}} 2^{j-k}\| {\Delta}_k \bu\|_{L^2}\big)^2 +  \big( {\sum_{k \geq j-1}} 2^{j-k}\| {\Delta}_k \bb\|_{L^2} \big)^2  \big]\nonumber\\
& + 2^j \|\nabla \bb \|_{L^\infty}  \| {\Delta}_j \bb\|^2_{L^2}+  2^j \|\nabla \bb \|_{L^\infty} \| {\Delta}_j \bb\|_{L^2} {\sum_{k \geq j-1}} 2^{j-k}\| {\Delta}_k \bb\|_{L^2}.
\end{align}
Multiplying \eqref{E9} by $2^{2sj}$, summing over $j \geq -1$, and integrating with respect to $t$, we deduce
\begin{equation*}
\begin{split}
K(t)  \lesssim \ & \| \Lambda^s \bu_0 \|^2_{L^2}+ \| \Lambda^s \bb_0 \|^2_{L^2}+  \int^t_0 \big( \|\nabla \bu\|_{L^\infty}+\|\nabla \bb\|_{L^\infty} \big) \| \Lambda^s\bu \|^2_{L^2}\mathrm{d}\tau\\
& +  {\sum_{j \geq -1}} 2^{(2s+1)j} \int^t_0 \|\nabla \bb\|_{L^\infty}\big(  \| {\Delta}_j \bb\|^2_{L^2}+( {\sum_{k \geq j-1}} 2^{j-k} \| {\Delta}_k \bb\|_{L^2}  )^2 \big) \mathrm{d}\tau\\
& + \int^t_0 \big( \|\nabla \bu\|_{L^\infty}+\|\nabla \bb\|_{L^\infty} \big) \| \Lambda^s \bb \|^2_{L^2} \mathrm{d}\tau,
\end{split}
\end{equation*}
where
\begin{equation*}
K(t)=\| \Lambda^s \bu(t) \|^2_{L^2}+ \| \Lambda^s \bb(t) \|^2_{L^2}+ \int^t_0 \| \Lambda^{\alpha+s} \bu(\tau) \|^2_{L^2} \mathrm{d} \tau+ \int^t_0 \| \Lambda^{\frac{1}{2}+s} \bb(\tau) \|^2_{L^2} \mathrm{d} \tau.
\end{equation*}
Using Young's inequality for series convolution, we obtain
\begin{equation*}
\begin{split}
{\sum_{j \geq -1}} 2^{(2s+1)j}  ( {\sum_{k \geq j-1}} 2^{j-k} \| {\Delta}_k \bb\|_{L^2}  )^2 &= {\sum_{j \geq -1}}  ( {\sum_{k \geq j-1}} 2^{(j-k)(s+\frac{3}{2})} 2^{(s+\frac{1}{2})k}\| {\Delta}_k \bb\|_{L^2}  )^2\\
& \lesssim {\sum_{j \geq -1}} 2^{(2s+1)j} \| {\Delta}_j \bb\|^2_{L^2}.
\end{split}
\end{equation*}
Consequently, we obtain
\begin{equation*}
\begin{split}
K(t) \lesssim & \ \| \Lambda^s \bu_0 \|^2_{L^2}+ \| \Lambda^s \bb_0 \|^2_{L^2}+  \int^t_0 \|\nabla \bb\|_{L^\infty} \| \Lambda^{\frac{1}{2}+s} \bb\|^2_{L^2} \mathrm{d}\tau\\
& + \int^t_0 \big( \|\nabla \bu\|_{L^\infty}+\|\nabla \bb\|_{L^\infty} \big) \big(\| \Lambda^s \bu\|^2_{L^2}+ \| \Lambda^s \bb \|^2_{L^2} \big)\mathrm{d}\tau.
  \end{split}
\end{equation*}
Using the Sobolev inequalities:
\begin{equation*}
\begin{split}
 \| \nabla \bb\|_{L^\infty} \lesssim \|\bb\|_{H^s}, \quad \| \nabla \bu\|_{L^\infty} \lesssim \|\bu\|_{H^s},\ s>\frac{5}{2},
 \end{split}
\end{equation*}
and the Gagliardo-Nirenberg inequalities:
\begin{align}\label{GN}
&\| \Lambda^{s} \bb \|_{L^2} \lesssim \|\Lambda^{\frac{1}{2}} \bb\|_{L^2}^{\frac{1}{2s}} \| \Lambda^{\frac{1}{2}+s} \bb\|^{1-\frac{1}{2s}}_{L^2},\nonumber\\
&\| \Lambda^{s} \bu \|_{L^2} \lesssim \|\Lambda^{\alpha} \bu\|_{L^2}^{\frac{\alpha}{s}} \| \Lambda^{\alpha+s} \bu\|^{1-\frac{\alpha}{s}}_{L^2},
\end{align}
we update the estimate of $K(t)$ as
\begin{align}\label{E1}
K(t) \lesssim \ & \| \Lambda^s \bu_0 \|^2_{L^2} + \| \Lambda^s \bb_0 \|^2_{L^2} + \int^t_0 \| \bb\|_{H^s}  \| \Lambda^{s+\frac{1}{2}} \bb \|^2_{L^2} \mathrm{d}\tau \nonumber\\
& + \int^t_0 \big( \|\bu\|_{H^s}+\| \bb\|_{H^s} \big)  \big(\|\Lambda^{\alpha} \bu\|_{L^2}^{\frac{2\alpha}{s}} \| \Lambda^{\alpha+s} \bu\|^{2-\frac{2\alpha}{s}}_{L^2}+\|\Lambda^{\frac{1}{2}} \bb\|_{L^2}^{\frac{1}{s}} \| \Lambda^{\frac{1}{2}+s} \bb\|^{2-\frac{1}{s}}_{L^2}\big) \mathrm{d}\tau.
\end{align}
Set
\begin{equation*}
  E(t)=\sup_{\tau \in [0,t]}\big(\| \bu(t) \|^2_{{H}^s}+ \| \bb(t) \|^2_{{H}^s}\big)+ \int^t_0 \|\Lambda^{\alpha} \bu(\tau) \|^2_{{H}^s} \mathrm{d} \tau + \int^t_0 \| \Lambda^{\frac{1}{2}} \bb(\tau) \|^2_{{H}^s} \mathrm{d} \tau.
\end{equation*}
Adding \eqref{E0} and \eqref{E1} implies $E(t) \lesssim E(0)+  E^\frac32(\tau)$, from which and standard continuation argument the global bound of $E(t)$ follows when $E(0)$ is sufficiently small. The uniqueness of the solution follows from a routine argument and the energy estimates derive above. We omit the details for brevity.
\subsection{Local existence with small energy}
After obtaining the global energy bounds, we now give a proof of local existence for small data. For \eqref{HMHD} is a quasilinear system, so we will seek a local solution in a weaker Sobolev spaces, and then improve it's regularity by combining the global bounds of energy. We will obtain the local existence through an approximation procedure, where the approximation procedure has been shown in \cite{CWW}.  For each positive integer $n$, we define
\begin{equation*}
  \widehat{J_n f}(\xi)=\chi_{B_n}(\xi)\hat{f}(\xi),
\end{equation*}
where $B_n$ denotes the closed ball of radius $n$ centered at $0$ and $\chi_{B_n}$ denotes the characteristic functions on ${B_n}$. For $a \geq 0$, we denote
\begin{equation*}
  H^a_{n}=\{f \in H^a(\mathbb{R}^3), \mathrm{supp}\hat{f} \subseteq B_n  \}.
\end{equation*}
We now seek a solution $(\bu,\bb)\in H^s_{n}$ satisfying
\begin{equation}\label{dd}
\begin{cases}
\bu_t+(-\Delta)^{\alpha} \bu+J_n ( J_n \mathcal{P} \bu \cdot \nabla J_n \mathcal{P} \bu) - J_n ( J_n \mathcal{P} \bb \cdot \nabla J_n \mathcal{P} \bb)=\mathbf{0},\\
\bb_t+(-\Delta)^{\beta} \bb+J_n( J_n \mathcal{P} \bu \cdot \nabla J_n \mathcal{P}\bb)-J_n( J_n \mathcal{P}  \bb \cdot \nabla J_n \mathcal{P}  \bu)
\\ \ \ \qquad \qquad \qquad \qquad \qquad+ J_n ( { \nabla \times \left( \left(\nabla \times J_n \mathcal{P} \bb \right) \times J_n \mathcal{P} \bb\right)}) =\mathbf{0},\\
\nabla\cdot  \bu =0, \quad \nabla\cdot \bb=0;\\
\bu|_{t=0}=J_n \bu_0, \ \ \bb|_{t=0}=J_n\bb_0,
\end{cases}
\end{equation}
where $\mathcal{P}$ denotes the projection onto divergence-free vector fields. For each fixed integer $n\geq 1$, it is not very hard, although tedious, to verify that the right hand side of \eqref{dd} satisfies the Lipschitz condition in $H^s_n$ and, by Picard's theorem, \eqref{dd}
has a unique global (in time) solution. The uniqueness implies that
\begin{equation*}
  J_n \mathcal{P} \bu=\bu, \quad J_n \mathcal{P} \bb= \bb.
\end{equation*}
and ensures the divergence-free conditions $\nabla\cdot  \bu = \nabla\cdot \bb=0$. Then, \eqref{dd} is
simplified to
\begin{equation}\label{dd0}
\begin{cases}
\bu_t+(-\Delta)^{\alpha} \bu+J_n (  \bu \cdot \nabla  \bu) - J_n (  \bb \cdot \nabla  \bb)=\mathbf{0},\\
\bb_t+(-\Delta)^{\frac12} \bb+J_n( \bu \cdot \nabla \bb)-J_n(  \bb \cdot \nabla   \bu)
+ J_n ( { \nabla \times \left( \left(\nabla \times  \bb \right) \times  \bb\right)}) =\mathbf{0},\\
\nabla\cdot  \bu =0, \quad \nabla\cdot \bb=0;\\
\bu|_{t=0}=J_n \bu_0, \ \ \bb|_{t=0}=J_n \bb_0,
\end{cases}
\end{equation}
We denote the solution of \eqref{dd0} as $(\bu^n, \bb^n)$. Set the total energy
\begin{equation*}
\begin{split}
  E^s(t)[\bu^n, \bb^n]=& \| \bu^n(t) \|^2_{{H}^s}+ \| \bb^n(t) \|^2_{{H}^s}
   + \int^t_0 ( \|\Lambda^{\alpha} \bu^n(\tau) \|^2_{{H}^s}  +  \| \Lambda^{\frac{1}{2}} \bb^n(\tau) \|^2_{{H}^s} ) \mathrm{d} \tau.
\end{split}
\end{equation*}
From section 3.1, we can get the following global bound
\begin{equation}\label{ub}
  E^s(1)[\bu^n, \bb^n] \lesssim \epsilon.
\end{equation}
We then find a limit of the sequence $\{(\bu^n, \bb^n)\}_{n\geq 1}$ in a appropriate functional spaces. To do that, let us now estimate the lower-order energy
\begin{equation*}
  E^{0}(1)[\bu^n-\bu^m, \bb^n-\bb^m].
\end{equation*}
By \eqref{dd0}, we only need to see the quasi-linear part
\begin{equation*}
  \begin{split}
   Q=& \int_{\mathbb{R}^3}  \left( \nabla \times \big( (\nabla \times  \bb^n ) \times  \bb^n \big)-\nabla \times \big( (\nabla \times  \bb^m ) \times  \bb^m \big) \right) \cdot ( \bb^n- \bb^m)dx
  \\
  =& \int_{\mathbb{R}^3}   \nabla \times \big( (\nabla \times  (\bb^n-\bb^m) ) \times  \bb^n \big)  \cdot ( \bb^n- \bb^m)dx
  \\
  & + \int_{\mathbb{R}^3}   \nabla \times \big( (\nabla \times  \bb^m ) \times  (\bb^n-\bb^m) \big)  \cdot ( \bb^n- \bb^m)dx
  \\
  =& -\int_{\mathbb{R}^3}   \big( (\nabla \times  (\bb^n-\bb^m) ) \times  \bb^n \big)  \cdot  \nabla \times ( \bb^n- \bb^m)dx
  \\
  & + \int_{\mathbb{R}^3}   \nabla \times \big( (\nabla \times  \bb^m ) \times  (\bb^n-\bb^m) \big)  \cdot ( \bb^n- \bb^m)dx
  \\
  =& \int_{\mathbb{R}^3}   \nabla \times \big( (\nabla \times  \bb^m ) \times  (\bb^n-\bb^m) \big)  \cdot ( \bb^n- \bb^m)dx
  \\
  =& \int_{\mathbb{R}^3}  \left\{  (\bb^n-\bb^m) \cdot \nabla (\nabla \times \bb^m)-  (\nabla \times \bb^m ) \cdot \nabla ( \bb^n- \bb^m)\right\}\cdot ( \bb^n- \bb^m)dx.
  \end{split}
\end{equation*}
Using H\"older's inequality, we can derive that
\begin{equation*}
  \begin{split}
  |Q| \lesssim & \| \bb^n-\bb^m \|_{L^3} \|\nabla (\nabla \times \bb^m)\|_{L^6} \| \bb^n- \bb^m \|_{L^2}
  \\
  & + \|\nabla \times \bb^m\|_{L^4} \|\nabla ( \bb^n- \bb^m)\|_{L^4} \| \bb^n- \bb^m \|_{L^2}
  \\
  \lesssim & (\|\Lambda^{\frac12} \bb^m\|^2_{H^s}+\|\Lambda^{\frac12} \bb^n\|^2_{H^s}) \| \bb^n- \bb^m \|^2_{L^2}+ \frac{1}{100} \|\Lambda^{\frac12} (\bb^n-\bb^m)\|^2_{L^2}.
  \end{split}
\end{equation*}
These uniform bounds allow us to show that
\begin{equation*}
  \frac{d}{dt} \| \bb^n-\bb^m \|^2_{L^2}+\frac{1}{4}\|\Lambda^{\frac12} (\bb^n-\bb^m) \|^2_{L^2}  \leq C(\|\Lambda^{\frac12} \bb^m\|^2_{H^s}+\|\Lambda^{\frac12} \bb^n\|^2_{H^s}) \| \bb^n- \bb^m \|^2_{L^2}.
\end{equation*}
Integrating the above inequality on $[0,t]$, we can deduce that by Gronwall's inequality
\begin{equation*}
  E^{0}(1)[\bu^n-\bu^m, \bb^n-\bb^m] \leq CE^{0}(0)[\bu^n-\bu^m, \bb^n-\bb^m].
\end{equation*}
As a result, we have
\begin{equation*}
  E^{0}(1)[\bu^n-\bu^m, \bb^n-\bb^m]\rightarrow 0, \quad \mathrm{if} \ n, m \rightarrow \infty.
\end{equation*}
Let $(\bu,\bb)$ be the limit. Then $E^s(1)[\bu,\bb] \lesssim \epsilon.$ By \eqref{ub} and the interpolation inequality
\begin{equation*}
  \| f \|_{H^a} \leq \| f \|^{1-\frac{a}{s}}_{L^2}\| f \|^{\frac{a}{s}}_{H^s}, \quad 0<a<s,
\end{equation*}
we further obtain the strong convergence
\begin{equation*}
  E^{a}(1)[\bu^n-\bu,\bb^n-\bb] \rightarrow 0, n \rightarrow \infty, \quad 0<a<s.
\end{equation*}
Consequently, $(\bu,\bb) \in C([0,1]; H^a)$. This strong convergence makes it easy to
check that $(\bu,\bb)$ satisfies the Hall-MHD equation \eqref{HMHD}. Combining the local existence and global bounds in Section 3.2 and Section 3.1 respectively, we can conclude the proof of Theorem \ref{thm1}. \hfill$\square$

\section{Proof of Theorem \ref{thm2}}
In this section, we prove Theorem \ref{thm2} by using Littlewood-Paley decomposition and energy method. Let $\bU$ and $\bB$ be solutions to \eqref{U} and \eqref{B}, respectively. Let $\bu=\Bf+{\bU}, \bb=\Bh+{\bB}$, where $(\bu,\bb)$ denotes the solution to \eqref{HMHD}. We now give a global bound of the energy estimates for the disturbed velocity $\Bf$ and the disturbed magnetic field $\Bh$. By using \eqref{HMHD}, \eqref{U} and \eqref{B}, then $\Bf$ and $\Bh$ satisfy
\begin{equation}\label{fh}
\left\{
\begin{aligned}
\Bf_t+ (-\Delta)^\alpha \Bf+\nabla \tilde{P}&=-\Bf \cdot \nabla \Bf-{\bU} \cdot \nabla \Bf - \Bf \cdot \nabla {\bU}+\Bh \cdot \nabla \Bh + {\bB} \cdot \nabla \Bh+\Bh \cdot \nabla {\bB}+\bF,\\
\Bh_t+ (-\Delta)^{\frac{1}{2}} \Bh&=-\Bf \cdot \nabla \Bh-{\bU} \cdot \nabla \Bh - \Bf \cdot \nabla {\bB}+\Bh \cdot \nabla \Bf+ {\bB} \cdot \nabla \Bf + \Bh \cdot \nabla {\bU}\\
&\quad-\nabla \times \left( \left(\nabla \times \bb \right) \times \bb\right)+\bG,
\end{aligned}
\right.
\end{equation}
where $\tilde{P}=P+\frac{1}{2}|{\bU}|^2-\frac{1}{2}|{\bB}|^2$, $\bF= {\bU}\times (\nabla \times {\bU})-{\bB}\times \left(\nabla \times {\bB}\right)$ and $\bG= \nabla \times ( {\bU} \times {\bB})$. Note that according to Lemma \ref{L1}, $\bF$ and $\bG$ satisfy the estimates \eqref{U1} and \eqref{UB}, respectively. We divide the subsequent proof into several steps.

\textbf{Step 1.} Taking $L^2$ inner product of the two equations in \eqref{fh} with $\Bf$ and $\Bh$, respectively, we have
\begin{align}\label{z1}
&\|\Bf(t)\|^2_{L^2}+\|\Bh(t)\|^2_{L^2}+ \int^t_0 \|\Lambda^{\alpha} \Bf(\tau)\|^2_{L^2}\mathrm{d}\tau+ \int^t_0 \|\Lambda^{\frac{1}{2}} \Bh(\tau)\|^2_{L^2}\mathrm{d}\tau\nonumber\\
\leq \ & \|\bu_{01}\|^2_{L^2}+\|\bb_{01}\|^2_{L^2}+ \int^t_0 \|(\nabla \bU, \nabla \bB)  \|_{L^\infty}\left( \|\Bf\|^2_{L^2}+ \|\Bh\|^2_{L^2} \right)\mathrm{d}\tau\nonumber\\
& -\int^t_0 \int_{\mathbb{R}^3}  \nabla \times \left( (\nabla \times \bb) \times \bb  \right) \cdot \Bh\, \mathrm{d}x \mathrm{d}\tau+ \int^t_0 \left( \| \bF \|_{L^2} \| \Bf \|_{L^2}+\| \bG \|_{L^2} \| \Bh\|_{L^2} \right) \mathrm{d}\tau,
\end{align}
where we used the decomposition $\bu_0=\bu_{01}+\bu_{02},\ \bb_0=\bb_{01}+\bb_{02}$ (see Theorem \ref{thm1}) and the initial condition $\bU_0=\bu_{02},\ \bB_0=\bb_{02}$ (see \eqref{U} and \eqref{B}). Since $\bb=\bB+\Bh$, we can write $\int^t_0  \int_{\mathbb{R}^3} \nabla \times \left( (\nabla \times \bb) \times \bb  \right) \cdot \Bh \,\mathrm{d}x\mathrm{d}\tau$ as
\begin{equation*}
\begin{split}
  &\int^t_0  \int_{\mathbb{R}^3} \nabla \times \left( (\nabla \times \bb) \times \bb  \right) \cdot \Bh\, \mathrm{d}x \mathrm{d}\tau
  =\int^t_0 \int_{\mathbb{R}^3} \left( (\nabla \times \bb) \times \bb  \right) \cdot \nabla \times \Bh\, \mathrm{d}x \mathrm{d}\tau
  \\
  =&\int^t_0  \int_{\mathbb{R}^3} \left( (\nabla \times \bB) \times \bB  \right) \cdot \nabla \times \Bh\, \mathrm{d}x \mathrm{d}\tau+\int^t_0 \int_{\mathbb{R}^3}  \left( (\nabla \times \bB) \times \Bh  \right) \cdot \nabla \times \Bh\,  \mathrm{d}x \mathrm{d}\tau.
  \end{split}
\end{equation*}
Using H\"older's inequality, we can show that for $s>\frac52$,
$$
\begin{aligned}
&\left| \int^t_0  \int_{\mathbb{R}^3} \left( (\nabla \times \bB) \times \bB  \right) \cdot \nabla \times \Bh\, \mathrm{d}x \mathrm{d}\tau+\int^t_0 \int_{\mathbb{R}^3}  \left( (\nabla \times \bB) \times \Bh  \right) \cdot \nabla \times \Bh\, \mathrm{d}x \mathrm{d}\tau \right| \\
\le & \int^t_0 \| (\nabla \times \bB) \times \bB \|_{L^2} \|\Bh\|_{H^s} \mathrm{d}\tau+\int^t_0 \| \nabla \bB \|_{L^\infty} \|\Bh\|^2_{H^s} \mathrm{d}\tau.
\end{aligned}
$$
So we update \eqref{z1} as
\begin{align}\label{L2}
&\|\Bf(t)\|^2_{L^2}+\|\Bh(t)\|^2_{L^2}+ \int^t_0 \|\Lambda^{\alpha} \Bf(\tau)\|^2_{L^2}\mathrm{d}\tau+ \int^t_0 \|\Lambda^{\frac{1}{2}} \Bh(\tau)\|^2_{L^2}\mathrm{d}\tau\nonumber\\
\leq \ & \|\bu_{01}\|^2_{L^2}+\|\bb_{01}\|^2_{L^2}+ \int^t_0 \|(\nabla \bU, \nabla \bB)  \|_{L^\infty}\left( \|\Bf\|^2_{L^2}+ \|\Bh\|^2_{L^2} \right)\mathrm{d}\tau\nonumber\\
& +\int^t_0 \| (\nabla \times \bB) \times \bB \|_{L^2} \|\Bh\|_{H^s} \mathrm{d}\tau+\int^t_0 \| \nabla \bB \|_{L^\infty} \|\Bh\|^2_{H^s} \mathrm{d}\tau\nonumber\\
&+ \int^t_0 \left( \| \bF \|_{L^2} \| \Bf \|_{L^2}+\| \bG \|_{L^2} \| \Bh\|_{L^2} \right) \mathrm{d}\tau.
\end{align}

\textbf{Step 2.}
For $j\ge-1$, taking ${\Delta}_j$ to \eqref{fh}, we have
\begin{equation*}
\left\{
\begin{aligned}
&{\partial}_t{\Delta}_j \Bf+{\Delta}_j (-\Delta)^{\alpha} \Bf + {\Delta}_j(\Bf \cdot \nabla \Bf) + \nabla {\Delta} _j \tilde{P}= {\Delta}_j(\Bh\cdot \nabla \Bh) -{\Delta}_j({\bU} \cdot \nabla \Bf) \\
&\qquad - {\Delta}_j(\Bf \cdot \nabla {\bU})+{\Delta}_j( {\bB} \cdot \nabla \Bh) +{\Delta}_j (\Bh \cdot \nabla {\bB})+ {\Delta}_j \bF,\\
&{\partial}_t {\Delta}_j \Bh + {\Delta}_j(\Bf \cdot \nabla \Bh) + {\Delta}_j (-\Delta)^{\frac{1}{2}}\Bh=  {\Delta}_j(\Bh \cdot \nabla \Bf)-{\Delta}_j( {\bU} \cdot \nabla \Bh ) \\
& \qquad - {\Delta}_j (\Bf \cdot \nabla {\bB})+ {\Delta}_j ({\bB} \cdot \nabla \Bf)+ {\Delta}_j(\Bh \cdot \nabla {\bU}) - {\Delta}_j \left( \nabla \times \left( \left(\nabla \times \bb \right) \times \bb\right) \right)+{\Delta}_j \bG.
\end{aligned}
\right.
\end{equation*}
Taking the $L^2$ inner product of the above equations with ${\Delta}_j \Bf$ and ${\Delta}_j \Bh$, respectively, we obtain
\begin{align}\label{fh}
 &\frac{1}{2} \frac{\mathrm{d}}{\mathrm{d}t} ( \|{\Delta}_j \Bf\|^2_{L^2}+ \|{\Delta}_j \Bh\|^2_{L^2}) +  2^{2j\alpha} \|{\Delta}_j \Bf\|^2_{L^2} + 2^{j} \|{\Delta}_j \Bh\|^2_{L^2}\nonumber\\
=\ & {\sum_{k=0}^{12}} J_k+ \int_{\mathbb{R}^3} {\Delta}_j \bF \cdot {\Delta}_j \Bf\, \mathrm{d}\x+ \int_{\mathbb{R}^3} {\Delta}_j \bG \cdot {\Delta}_j \Bh\, \mathrm{d}\x ,
\end{align}
where\begin{align*}
& J_0=-\int_{\mathbb{R}^3} {\Delta}_j(\Bf \cdot \nabla \Bf)\cdot {\Delta}_j \Bf\, \mathrm{d}\x, &J&_1=\int_{\mathbb{R}^3} {\Delta}_j(\Bh\cdot \nabla \Bh) \cdot {\Delta}_j \Bf\, \mathrm{d}\x,\\
& J_2= -\int_{\mathbb{R}^3} {\Delta}_j({\bU} \cdot \nabla \Bf) \cdot {\Delta}_j \Bf\, \mathrm{d}\x, &J&_3= -\int_{\mathbb{R}^3} {\Delta}_j(\Bf \cdot \nabla {\bU})\cdot {\Delta}_j \Bf\, \mathrm{d}\x,\\
& J_4= \int_{\mathbb{R}^3} {\Delta}_j( {\bB} \cdot \nabla \Bh) \cdot {\Delta}_j \Bf\, \mathrm{d}\x, &J&_5 =\int_{\mathbb{R}^3} {\Delta}_j (\Bh \cdot \nabla {\bB})\cdot {\Delta}_j \Bf\, \mathrm{d}\x,\\
&J_6=-\int_{\mathbb{R}^3} {\Delta}_j(\Bf \cdot \nabla \Bh)\cdot {\Delta}_j \Bh\, \mathrm{d}\x,
&J&_7= \int_{\mathbb{R}^3} {\Delta}_j(\Bh \cdot \nabla \Bf)\cdot {\Delta}_j \Bh\, \mathrm{d}\x,\\
&J_8=-\int_{\mathbb{R}^3}  {\Delta}_j( {\bU} \cdot \nabla \Bh )\cdot {\Delta}_j \Bh\, \mathrm{d}\x, &J&_{9}=- \int_{\mathbb{R}^3} {\Delta}_j (\Bf \cdot \nabla {\bB})\cdot {\Delta}_j \Bh\, \mathrm{d}\x,\\
&J_{10}=\int_{\mathbb{R}^3} {\Delta}_j ({\bB} \cdot \nabla \Bf) \cdot {\Delta}_j \Bh\, \mathrm{d}\x, &J&_{11}= \int_{\mathbb{R}^3} {\Delta}_j(\Bh \cdot \nabla {\bU})\cdot {\Delta}_j \Bh\, \mathrm{d}\x,\\
& J_{12}=-\int_{\mathbb{R}^3} {\Delta}_j\left( \nabla \times \left( \left(\nabla \times \bb \right) \times \bb \right) \right) \cdot {\Delta}_j \Bh\, \mathrm{d}\x.
\end{align*}	
Next, we estimate the $J_k$'s one by one. The estimates are largely based on Lemma \ref{san}.

{\bf Step 3.} Using the incompressibility condition and Lemma \ref{san}, we can show that
\begin{align}\label{J0}
|J_0|&= \left|  \int_{\mathbb{R}^3}  [{\Delta}_j, \Bf\cdot \nabla]\Bf \cdot {\Delta}_j \Bf\, \mathrm{d}\x \right| \nonumber\\
&
\lesssim \| {\Delta}_j \Bf\|_{L^2} \|\nabla \Bf\|_{L^\infty} \big(  \| {\Delta}_j \Bf\|_{L^2} + {\sum_{k \geq j-1} 2^{j-k}}\| {\Delta}_k \Bf\|_{L^2}  \big).
\end{align}
Noticing
\begin{equation*}
  J_1+J_7=\int_{\mathbb{R}^3} [{\Delta}_j, \Bh \cdot \nabla]\Bh \cdot {\Delta}_j \Bf\, \mathrm{d}\x+ \int_{\mathbb{R}^3} [{\Delta}_j, \Bh \cdot \nabla]\Bf \cdot {\Delta}_j \Bh\, \mathrm{d}\x,
\end{equation*}
we can show by using Lemma \ref{san} that
\begin{align}\label{J1}
  |J_1+J_7| & \lesssim \| {\Delta}_j \Bf\|_{L^2} \|\nabla \Bh\|_{L^\infty} \big(  \| {\Delta}_j \Bf\|_{L^2} + {\sum_{k \geq j-1}} 2^{j-k}\| {\Delta}_k \Bf\|_{L^2}  \big)\nonumber\\
  &\quad+ \| {\Delta}_j \Bh\|_{L^2} \|\nabla \Bf\|_{L^\infty} \|{\Delta}_j \Bh\|_{L^2} + \|\nabla \Bh\|_{L^\infty}\|{\Delta}_j \Bf\|_{L^2}\|{\Delta}_j \Bh\|_{L^2}\nonumber\\
  & \quad+ \|\nabla \Bh\|_{L^\infty}\| {\Delta}_j \Bh\|_{L^2} {\sum_{k \geq j-1}} 2^{j-k}\| {\Delta}_k \Bf\|_{L^2}.
\end{align}
Similarly, we can show that
\begin{align}\label{J2}
|J_2|&= \left|\int_{\mathbb{R}^3} [\Delta_j , {\bU} \cdot \nabla ] \Bf \cdot {\Delta}_j \Bf\, \mathrm{d}\x  \right|\nonumber\\
& \lesssim  \|\nabla \bU\|_{L^\infty} \|{\Delta}_j \Bf\|^2_{L^2}+\|{\Delta}_j \bU\|_{L^\infty} \|\nabla \Bf\|_{L^2}\|{\Delta}_j \Bf\|_{L^2} \nonumber \\
& \quad +\|\nabla \bU\|_{L^\infty}\|{\Delta}_j \Bf\|_{L^2}{\sum_{k \geq j-1}} 2^{j-k} \|{{\Delta}}_k \Bf\|_{L^2}.
\end{align}
It follows directly from H\"older's inequality that
\begin{align}\label{J3}
  |J_3| \lesssim \| {\Delta}_j (\Bf \cdot \nabla \bU) \|_{L^2} \|{\Delta}_j \Bf\|_{L^2}, \quad |J_5| \lesssim \| {\Delta}_j (\Bh \cdot \nabla \bB) \|_{L^2} \|{\Delta}_j \Bf\|_{L^2},
\end{align}
and
\begin{equation}\label{J5}
  |J_9| \lesssim \| {\Delta}_j (\Bf \cdot \nabla \bB) \|_{L^2} \|{\Delta}_j \Bh\|_{L^2},  \quad |J_{11}| \lesssim \| {\Delta}_j (\Bh \cdot \nabla \bU) \|_{L^2} \|{\Delta}_j \Bh\|_{L^2}.
\end{equation}
A direct calculation shows that
\begin{equation*}
 J_4+J_{10}= \int_{\mathbb{R}^3} [{\Delta}_j, {\bB} \cdot \nabla ] \Bh \cdot {\Delta}_j \Bf\, \mathrm{d}\x+\int_{\mathbb{R}^3} [{\Delta}_j, {\bB} \cdot \nabla ] \Bf \cdot {\Delta}_j \Bh\, \mathrm{d}\x.
\end{equation*}
Using Lemma \ref{san}, we can show that
\begin{align}\label{J4}
|J_4+J_{10}| \lesssim \ &\|\nabla \bB\|_{L^\infty} (\|{\Delta}_j \Bh\|^2_{L^2}+\|{\Delta}_j \Bf\|^2_{L^2})\nonumber\\
&+\|{\Delta}_j \bB\|_{L^\infty} \|\nabla \Bh\|_{L^2}\|{\Delta}_j \Bf\|_{L^2}+\|{\Delta}_j \bB\|_{L^\infty} \|\nabla \Bf\|_{L^2}\|{\Delta}_j \Bf\|_{L^2}\nonumber\\
& +{\sum_{k \geq j-1}}\|\nabla \bB\|_{L^\infty}( \|{\Delta}_j \Bf\|_{L^2} \|{{\Delta}}_k \Bh\|_{L^2} + \|{\Delta}_j \Bh\|_{L^2}\|{{\Delta}}_k \Bf\|_{L^2}).
\end{align}
Repeated application of Lemma \ref{san} shows that
\begin{align}\label{J6}
|J_6|&= \big| \int_{\mathbb{R}^3} [\Delta_j, {\Bf} \cdot \nabla] \Bh \cdot {\Delta}_j \Bh\, \mathrm{d}\x \big|\nonumber\\
&\lesssim \| {\Delta}_j \Bh\|^2_{L^2} \|\nabla \Bf\|_{L^\infty} +  \| {\Delta}_j \Bh\|_{L^2} \|\nabla \Bh\|_{L^\infty}\| {\Delta}_j \Bf\|_{L^2}\nonumber\\
& \quad +\| {\Delta}_j \Bh\|_{L^2} \|\nabla \Bf\|_{L^\infty}{\sum_{k \geq j-1}} 2^{j-k}\| {\Delta}_k\Bh\|_{L^2},
\end{align}
and
\begin{align}\label{J8}
  |J_8| &= \left| \int_{\mathbb{R}^3} [{\Delta}_j, \bU \cdot \nabla]\Bh \cdot {\Delta}_j \Bh\,\mathrm{d}\x \right|\nonumber\\
  & \lesssim  \|\nabla \bB\|_{L^\infty} \|{\Delta}_j \Bh\|_{L^2}\|{\Delta}_j \Bf\|_{L^2}+\|{\Delta}_j \bB\|_{L^\infty} \|\nabla \Bh\|_{L^2}\|{\Delta}_j \Bf\|_{L^2}\nonumber \\
  & \quad  +\|\nabla \bB\|_{L^\infty}\|{\Delta}_j \Bf\|_{L^2}{\sum_{k \geq j-1}} 2^{j-k} \|{{\Delta}}_k \Bh\|_{L^2}.
\end{align}
It remains to estimate $J_{12}$. Since $\bb={\bB}+\Bh$, we rewrite $J_{12}$ as
\begin{equation*}
  J_{12}=J^{(1)}_{12}+J^{(2)}_{12}+J^{(3)}_{12}+J^{(4)}_{12},
\end{equation*}
where
\begin{equation*}
  \begin{split}
 & J^{(1)}_{12}=\int_{\mathbb{R}^3} {\Delta}_j \left( \big(\nabla \times {\bB} \big) \times {\bB} \right)  \cdot {\Delta}_j \nabla \times \Bh\, \mathrm{d}\x,\\
  & J^{(2)}_{12}=\int_{\mathbb{R}^3} {\Delta}_j \left( \big(\nabla \times \Bh \big) \times {\bB} \right)  \cdot {\Delta}_j \nabla \times \Bh\, \mathrm{d}\x,\\
  & J^{(3)}_{12}=\int_{\mathbb{R}^3} {\Delta}_j \left( \big(\nabla \times {\bB} \big) \times \Bh \right)  \cdot {\Delta}_j \nabla \times \Bh\, \mathrm{d}\x,\\
  & J^{(4)}_{12}=\int_{\mathbb{R}^3} {\Delta}_j \left( \big(\nabla \times \Bh \big) \times \Bh \right)  \cdot {\Delta}_j \nabla \times \Bh\, \mathrm{d}\x.
  \end{split}
\end{equation*}
Using H\"older's inequality, we can show that
\begin{equation*}
\begin{split}
  |J^{(1)}_{12}| &\lesssim \|{\Delta}_j \left( \big(\nabla \times {\bB} \big) \times {\bB} \right)\|_{L^2} \|{\Delta}_j \nabla \times \Bh\|_{L^2}\\
  & \lesssim 2^{\frac{j}{2}}\|{\Delta}_j \left( \big(\nabla \times {\bB} \big) \times {\bB} \right)\|_{L^2} \cdot 2^{\frac{j}{2}}\|{\Delta}_j \Bh\|_{L^2},\\
  |J^{(3)}_{12}| &\lesssim \|{\Delta}_j \left( \big(\nabla \times {\bB} \big) \times \Bh \right)\|_{L^2} \|{\Delta}_j \nabla \times \Bh\|_{L^2}\\
  & \lesssim 2^{\frac{j}{2}}\|{\Delta}_j \left( \big(\nabla \times {\bB} \big) \times \Bh \right)\|_{L^2} \cdot 2^{\frac{j}{2}}\|{\Delta}_j \Bh\|_{L^2}.
\end{split}
\end{equation*}
Since $ \big( {\Delta}_j(\nabla \times \Bh ) \times {\bB}  \big)  \cdot {\Delta}_j (\nabla \times \Bh)=0$, we derive that
\begin{equation*}
\begin{split}
  J^{(2)}_{12}=\int_{\mathbb{R}^3} \left( {\Delta}_j \big( \big(\nabla \times \Bh \big) \times {\bB}\big)- {\Delta}_j\big(\nabla \times \Bh \big) \times {\bB}  \right)  \cdot {\Delta}_j \nabla \times \Bh\, \mathrm{d}\x.
\end{split}
\end{equation*}
Using H\"older's inequality and commutator estimates, we can show that
\begin{equation*}
\begin{split}
|J^{(2)}_{12}| \lesssim \|{\Delta}_j \bB\|_{L^\infty} \|\nabla \Bh\|_{L^2} \|{\Delta}_j \nabla \times \Bh \|_{L^2}.
\end{split}
\end{equation*}
According to Lemma \ref{san}, we have
\begin{equation*}
  |J^{(4)}_{12}| \lesssim 2^j \|\nabla \Bh\|_{L^\infty} \|{\Delta}_j \Bh \|_{L^2} \cdot \big( \|{\Delta}_j \Bh \|_{L^2} +{\sum_{k \geq j-1}} 2^{j-k}\|{\Delta}_k \Bh \|_{L^2} \big).
\end{equation*}
We then conclude that
\begin{align}\label{J12}
  |J_{12}|
   \lesssim \ &2^{\frac{j}{2}}(\|{\Delta}_j ( (\nabla \times {\bB} ) \times {\bB} )\|_{L^2}+\|{\Delta}_j ( (\nabla \times {\bB} ) \times \Bh )\|_{L^2}) \cdot 2^{\frac{j}{2}}\|{\Delta}_j \Bh\|_{L^2}\nonumber \\
   &+\|{\Delta}_j \bB\|_{L^\infty} \|\nabla \Bh\|_{L^2} \|{\Delta}_j \nabla \times \Bh \|_{L^2}\nonumber\\
  &+2^j \|\nabla \Bh\|_{L^\infty} \|{\Delta}_j \Bh \|_{L^2} \cdot ( \|{\Delta}_j \Bh \|_{L^2} +{\sum_{k \geq j-1}} 2^{j-k}\|{\Delta}_k \Bh \|_{L^2} ).
\end{align}

{\bf Step 4.} After a series of operations (multiplying the resulting inequality by $2^{2sj}$, summing over $j \geq -1$, and integrating with respect to time) we can show from \eqref{fh} that
\begin{align}\label{highe}
 &\frac{1}{2}  \big( {\sum_{j \geq -1}}  2^{2sj}\|{\Delta}_j \Bf\|^2_{L^2}+{\sum_{j \geq -1}} 2^{2sj}\|{\Delta}_j \Bh\|^2_{L^2} \big) +  \int^t_0\big( {\sum_{j \geq -1}} 2^{2j\alpha+2sj} \|{\Delta}_j \Bf\|^2_{L^2} + 2^{j+2sj} \|{\Delta}_j \Bh\|^2_{L^2} \big)\mathrm{d}\tau \nonumber \\
=\ & {\sum_{j \geq -1}} \left( \| 2^{js} {\Delta}_j \bu_{01} \|^2_{L^2}+ \| 2^{js} {\Delta}_j \bb_{01} \|^2_{L^2}\right) + \int^t_0 {\sum_{j \geq -1}}2^{2sj} {\sum_{k=0}^{12}} J_k \mathrm{d}\tau\nonumber \\
&+ \int^t_0 \int_{\mathbb{R}^3} {\sum_{j \geq -1}} 2^{sj}{\Delta}_j \bF \cdot 2^{sj}{\Delta}_j \Bf\, \mathrm{d}\x \mathrm{d}\tau +  \int^t_0 \int_{\mathbb{R}^3} {\sum_{j \geq -1}} 2^{sj}{\Delta}_j \bG \cdot 2^{sj}{\Delta}_j \Bh\, \mathrm{d}\x \mathrm{d}\tau .
\end{align}
Using definition of the $\dot{H}^s$ norm, we update \eqref{highe} as
\begin{align}\label{highe2}
 & \|  \Bf(t) \|^2_{\dot{H}^s}+ \| \Lambda^s \Bh(t) \|^2_{\dot{H}^s}+ \int^t_0 \| \Lambda^{\alpha} \Bf(\tau) \|^2_{\dot{H}^s} \mathrm{d} \tau+  \int^t_0 \| \Lambda^{\frac{1}{2}} \Bh(\tau) \|^2_{\dot{H}^s} \mathrm{d} \tau\nonumber\\
  \lesssim \ & \|  \bu_{01} \|^2_{\dot{H}^s}+ \| \bb_{01} \|^2_{\dot{H}^s}+ \int^t_0 {\sum_{j \geq -1}}2^{2sj} {\sum_{k=0}^{12}} J_k \mathrm{d}\tau\nonumber
\\
&+ \int^t_0 \int_{\mathbb{R}^3} {\sum_{j \geq -1}} 2^{sj}{\Delta}_j \bF \cdot 2^{sj}{\Delta}_j \Bf\, \mathrm{d}\x \mathrm{d}\tau +  \int^t_0 \int_{\mathbb{R}^3} {\sum_{j \geq -1}} 2^{sj}{\Delta}_j \bG \cdot 2^{sj}{\Delta}_j \Bh\, \mathrm{d}\x \mathrm{d}\tau .
\end{align}
Using H\"older's inequality, we can estimate the second line on the right-hand side of \eqref{highe2} as
\begin{align}\label{highe3}
  & \int^t_0 \int_{\mathbb{R}^3} {\sum_{j \geq -1}} 2^{sj}{\Delta}_j \bF \cdot 2^{sj}{\Delta}_j \Bf\, \mathrm{d}\x \mathrm{d}\tau  +  \int^t_0 \int_{\mathbb{R}^3} {\sum_{j \geq -1}} 2^{sj}{\Delta}_j \bG \cdot 2^{sj}{\Delta}_j \Bh\, \mathrm{d}\x \mathrm{d}\tau\nonumber
\\
\leq & \int^t_0 \left( \| \bF \|_{\dot{H}^s} \| \Bf \|_{\dot{H}^s}+ \| \bG \|_{\dot{H}^s} \| \Bh \|_{\dot{H}^s}   \right)  \mathrm{d}\tau.
\end{align}
In the next step, we estimate the second term in the first line on the right-hand side of \eqref{highe2}, by using the estimates derived in {\bf Step 3}.

{\bf Step 5.} Using \eqref{J0} and H\"older's inequality, we can show that
\begin{align}\label{EJ0}
  & \Big| \int^t_0 {\sum_{j \geq -1}}2^{2sj}  J_0 \mathrm{d}\tau  \Big|\nonumber
  \\
  \lesssim & \int^t_0 \|\nabla \Bf\|_{L^\infty} \|  \Bf\|^2_{\dot{H}^s}\mathrm{d}\tau+ \int^t_0 \|\nabla \Bf\|_{L^\infty} {\sum_{j \geq -1}} \big( 2^{sj}\| {\Delta}_j \Bf\|_{L^2} \cdot {\sum_{k \geq j-1}}2^{sj}2^{j-k} \| {\Delta}_k \Bf\|_{L^2} \big)\mathrm{d}\tau\nonumber
  \\
  \lesssim & \int^t_0 \|\nabla \Bf\|_{L^\infty} \|  \Bf\|^2_{\dot{H}^s}\mathrm{d}\tau+ \int^t_0 \|\nabla \Bf\|_{L^\infty} {\sum_{j \geq -1}} \big( 2^{sj}\| {\Delta}_j \Bf\|_{L^2} \cdot {\sum_{k \geq -1}}2^{kj} \| {\Delta}_k \Bf\|_{L^2} \big)\mathrm{d}\tau\nonumber
  \\
  \lesssim & \int^t_0 \|\nabla \Bf\|_{L^\infty} \|  \Bf\|^2_{\dot{H}^s}\mathrm{d}\tau.
\end{align}
In a similar fashion, using \eqref{J1} and \eqref{J6}, we can show that
\begin{equation}\label{EJ1}
 \Big| \int^t_0 {\sum_{j \geq -1}}2^{2sj}  (J_1+J_7) \mathrm{d}\tau  \Big|
  \lesssim  \int^t_0 \|(\nabla \Bf, \nabla \Bh)\|_{L^\infty} \|  (\Bf, \Bh) \|^2_{\dot{H}^s}\mathrm{d}\tau,
\end{equation}
and
\begin{equation}\label{EJ6}
   \Big| \int^t_0 {\sum_{j \geq -1}}2^{2sj}  J_6 \mathrm{d}\tau  \Big| \\
  \lesssim \int^t_0 \|(\nabla \Bf, \nabla \Bh)\|_{L^\infty} \|  (\Bf, \Bh) \|^2_{\dot{H}^s}\mathrm{d}\tau.
\end{equation}
Using \eqref{J2} and the Sobolev embedding: $L^\infty \hookrightarrow \dot{B}^{0}_{\infty, \infty}$ (c.f. \cite{BC} Proposition 2.39), we deduce that
\begin{align}\label{EJ2}
\Big| \int^t_0 {\sum_{j \geq -1}}2^{2sj}  J_2 \mathrm{d}\tau  \Big|  \lesssim & \int^t_0 {\sum_{j \geq -1}} \big\{  \|\nabla \bU\|_{L^\infty} 2^{2sj}\|{\Delta}_j \Bf\|^2_{L^2}+2^{sj}\|{\Delta}_j \bU\|_{L^\infty} \|\nabla \Bf\|_{L^2} 2^{sj}\|{\Delta}_j \Bf\|_{L^2} \nonumber \\
& \qquad\qquad + 2^{2sj}\|\nabla \bU\|_{L^\infty}\|{\Delta}_j \Bf\|_{L^2}{\sum_{k \geq j-1}} 2^{j-k} \|{{\Delta}}_k \Bf\|_{L^2} \big\} \mathrm{d}\tau \nonumber \\
\lesssim & \int^t_0 \|\nabla \bU\|_{L^\infty} \|  \Bf \|^2_{\dot{H}^s}+\| \bU\|_{\dot{B}^{s+1}_{\infty, \infty}} \|\nabla \Bf\|_{L^2} \| \Bf\|_{\dot{H}^{s}} \mathrm{d}\tau \nonumber \\
\lesssim & \int^t_0 \|\nabla \bU\|_{L^\infty} \|  \Bf \|^2_{\dot{H}^s}+\| \bU\|_{\dot{W}^{s+1, \infty}} \|\nabla \Bf\|_{L^2} \| \Bf\|_{\dot{H}^{s}} \mathrm{d}\tau \nonumber \\
\lesssim & \int^t_0 \|\bU\|_{W^{s+1,\infty}} \|  \Bf \|^2_{{H}^s} \mathrm{d}\tau.
\end{align}
In deriving \eqref{EJ2}, we used the following argument based on H\"older's inequality:
\begin{align}\label{SE}
 & {\sum_{j\geq -1} 2^{sj}} \|{\Delta}_j \bU\|_{L^\infty}  2^{sj}\|{\Delta}_j \Bf\|_{L^2} \nonumber\\
 \leq & \sup_{j \geq -1 }\big(2^{(s+1)j}\|{\Delta}_j \bU\|_{L^\infty}\big)  \big( {\sum_{j\geq -1}  (2^{sj}}  \|{\Delta}_j \Bf\|_{L^2})^2 \big)^{\frac{1}{2}} \big( {\sum_{j\geq -1} }  2^{-2j} \big)^{\frac{1}{2}}
  \nonumber\\
  \leq &  \sup_{j \geq -1 } \big(2^{(s+1)j}\|{\Delta}_j \bU\|_{L^\infty}\big)  \big( {\sum_{j\geq -1} }  ( 2^{sj}\|{\Delta}_j \Bf\|_{L^2})^2 \big)^{\frac{1}{2}}
  \nonumber\\
  = & \| \bU\|_{\dot{B}^{s+1}_{\infty, \infty}} \| \Bf\|_{\dot{H}^{s}}.
\end{align}
In a similar fashion, using \eqref{J3} and \eqref{J5}, we can show that
\begin{align}\label{EJ3}
 & \Big| \int^t_0 {\sum_{j \geq -1}}2^{2sj}  (J_3+J_5+J_9+J_{11}) \mathrm{d}\tau  \Big|
 \nonumber \\
  \lesssim & \int^t_0 {\sum_{j \geq -1}} \big\{  2^{js}\| {\Delta}_j (\Bf \cdot \nabla \bU) \|_{L^2} \cdot  2^{js} \|{\Delta}_j \Bf\|_{L^2}+   2^{js}\| {\Delta}_j (\Bh \cdot \nabla \bB) \|_{L^2} \cdot  2^{js}\|{\Delta}_j \Bf\|_{L^2}
  \nonumber\\
  & \qquad\qquad +2^{js} \| {\Delta}_j (\Bf \cdot \nabla \bB) \|_{L^2}\cdot 2^{js} \|{\Delta}_j \Bh\|_{L^2}+  2^{js}\| {\Delta}_j (\Bh \cdot \nabla \bU) \|_{L^2} \cdot 2^{js} \|{\Delta}_j \Bh\|_{L^2} \big \} \mathrm{d}\tau
  \nonumber\\
  \lesssim & \int^t_0 \big( \| \Bf \cdot \nabla \bU \|_{\dot{H}^s} \| \Bf\|_{\dot{H}^s}+\| \Bh \cdot \nabla \bB \|_{\dot{H}^s} \| \Bf\|_{\dot{H}^s} \big) \mathrm{d}\tau
  \nonumber\\
  & +\int^t_0 \big( \| \Bf \cdot \nabla \bB \|_{\dot{H}^s} \| \Bh\|_{\dot{H}^s}+\| \Bh \cdot \nabla \bU \|_{\dot{H}^s} \| \Bh\|_{\dot{H}^s} \big) \mathrm{d}\tau
  \nonumber\\
  \lesssim & \int^t_0 \| (\bU,  \bB) \|_{{W}^{s+1,\infty}} \big(\| \Bf \|^2_{\dot{H}^s}+\| \Bh \|^2_{\dot{H}^s}\big)  \mathrm{d}\tau.
\end{align}
Using \eqref{J4} and \eqref{J8}, we can show that
\begin{align}\label{EJ4}
&\Big| \int^t_0 {\sum_{j \geq -1}}2^{2sj}  (J_4+J_{10}) \mathrm{d}\tau  \Big| \nonumber\\
\lesssim & \int^t_0  \big\{ \|\nabla \bB\|_{L^\infty} \big( \| \Bh\|^2_{\dot{H}^s}+\| \Bf\|^2_{\dot{H}^s}\big) +\| \bB\|_{\dot{B}^{s+1}_{\infty, \infty}} \big( \|\nabla \Bh\|_{L^2}+ \|\nabla \Bf\|_{L^2}\big) \|\Bf\|_{\dot{H}^s}\nonumber\\
&\qquad +\|\nabla \bB\|_{L^\infty}  \| \Bf\|_{\dot{H}^s} \| \Bh\|_{\dot{H}^s} \big\} \mathrm{d}\tau
\nonumber\\
 \lesssim & \int^t_0   \|\bB\|_{W^{s+1,\infty}} ( \| \Bh\|^2_{{H}^s}+\| \Bf\|^2_{{H}^s}) \mathrm{d}\tau,
\end{align}
and
\begin{align}\label{EJ8}
 & \Big| \int^t_0 {\sum_{j \geq -1}}2^{2sj}  J_8 \mathrm{d}\tau  \Big| \nonumber \\
 \lesssim  & \int^t_0  \big\{ \|\nabla \bB\|_{L^\infty} \big( \| \Bh\|^2_{\dot{H}^s}+\| \Bf\|^2_{\dot{H}^s}\big)+\| \bB\|_{\dot{B}^{s+1}_{\infty, \infty}}  \|\nabla \Bh\|_{L^2} \|\Bf\|_{\dot{H}^s}+\|\nabla \bB\|_{L^\infty}  \| \Bf\|_{\dot{H}^s} \| \Bh\|_{\dot{H}^s} \big\} \mathrm{d}\tau \nonumber \\
  \lesssim & \int^t_0   \|\bB\|_{W^{s+1,\infty}} ( \| \Bh\|^2_{{H}^s}+\| \Bf\|^2_{{H}^s}) \mathrm{d}\tau.
\end{align}
Lastly, using \eqref{J12}, we can show that
\begin{align}\label{EJ121}
  \Big| \int^t_0 {\sum_{j \geq -1}}2^{2sj}  J_{12} \mathrm{d}\tau  \Big| \lesssim   &\int^t_0 \big\{\big( \| (\nabla \times {\bB} ) \times {\bB} \|_{\dot{H}^{s+\frac{1}{2}}} + \| (\nabla \times {\bB} ) \times {\Bh} \|_{\dot{H}^{s+\frac{1}{2}}} \big) \| \Bh\|_{\dot{H}^{s+\frac{1}{2}}}\nonumber\\
  & \qquad\quad + \| \bB\|_{\dot{B}^{s+\frac{3}{2}}_{\infty,\infty}} \|\nabla \Bh\|_{L^{2}} \|\nabla \Bh\|_{\dot{H}^{s-\frac{1}{2}}}\big\}\mathrm{d}\tau\nonumber \\
   & + {\sum_{j \geq -1}} 2^{(2s+1)j} {\int^t_0} \|\nabla \Bh\|_{L^\infty}\big(  \| {\Delta}_j \Bh\|^2_{L^2}+\big( {\sum_{k \geq j-1}} 2^{j-k} \| {\Delta}_k \Bh\|_{L^2}  \big)^2 \big) \mathrm{d}\tau\nonumber\\
  \lesssim   &\int^t_0\big\{ \big( \| (\nabla \times {\bB} ) \times {\bB} \|_{\dot{H}^{s+\frac{1}{2}}} + \| \nabla {\bB} \|_{W^{s+\frac{1}{2}, \infty}} \|{\Bh} \|_{\dot{H}^{s+\frac{1}{2}}} \big) \| \Bh\|_{\dot{H}^{s+\frac{1}{2}}}\nonumber\\
  & \qquad\quad + \| \bB\|_{W^{s+\frac{3}{2},\infty}} \|\nabla \Bh\|_{L^{2}} \|\nabla \Bh\|_{\dot{H}^{s-\frac{1}{2}}}\big\}\mathrm{d}\tau\nonumber\\
   & + {\sum_{j \geq -1}} 2^{(2s+1)j} {\int^t_0} \|\nabla \Bh\|_{L^\infty}\big(  \| {\Delta}_j \Bh\|^2_{L^2}+\big( {\sum_{k \geq j-1}} 2^{j-k} \| {\Delta}_k \Bh\|_{L^2}  \big)^2 \big) \mathrm{d}\tau\nonumber\\
  \lesssim   &\int^t_0 \big\{\big( \| (\nabla \times {\bB} ) \times {\bB} \|_{\dot{H}^{s+\frac{1}{2}}} + \|  {\bB} \|_{W^{s+\frac{3}{2}, \infty}} \| \Lambda^{\frac{1}{2}}{\Bh} \|_{\dot{H}^{s}} \big) \| \Lambda^{\frac{1}{2}}{\Bh} \|_{\dot{H}^{s}} \nonumber\\
  &\qquad\quad  +  \|\bB\|_{W^{s+\frac{3}{2},\infty}} \|\nabla \Bh\|_{L^{2}} \| \Lambda^{\frac{1}{2}}{\Bh} \|_{\dot{H}^{s}}\big\} \mathrm{d}\tau\nonumber\\
   & + {\sum_{j \geq -1}} 2^{(2s+1)j} {\int^t_0} \|\nabla \Bh\|_{L^\infty}\big(  \| {\Delta}_j \Bh\|^2_{L^2}+\big( {\sum_{k \geq j-1}} 2^{j-k} \| {\Delta}_k \Bh\|_{L^2}  \big)^2\big) \mathrm{d}\tau,
\end{align}
where we used the qualitative equivalence:
$$
\| \Lambda^{\frac{1}{2}}{\Bh} \|_{\dot{H}^{s}} \simeq  \|{\Bh} \|_{\dot{H}^{s+\frac{1}{2}}}\simeq  \|\nabla{\Bh} \|_{\dot{H}^{s-\frac{1}{2}}}.
$$
Using the Gagliardo-Nirenberg inequality:
$$
\| \nabla \Bh \|_{L^2} \lesssim \|\Lambda^{\frac{1}{2}} \Bh\|_{L^2}^{1-\frac{1}{2s}} \| \Lambda^{\frac{1}{2}+s} \Bh\|^{\frac{1}{2s}}_{L^2}\lesssim \| \Lambda^{\frac{1}{2}} \Bh\|_{H^s}, \quad s>\frac52,
$$
we update \eqref{EJ121} as
\begin{align}\label{EJ12}
  \Big| \int^t_0 {\sum_{j \geq -1}}2^{2sj}  J_{12} \mathrm{d}\tau  \Big|
   \lesssim   &\int^t_0 \big( \| (\nabla \times {\bB} ) \times {\bB} \|_{\dot{H}^{s+\frac{1}{2}}} + \|  {\bB} \|_{W^{s+\frac{3}{2}, \infty}} \| \Lambda^{\frac{1}{2}}{\Bh} \|_{{H}^{s}} \big) \| \Lambda^{\frac{1}{2}}{\Bh} \|_{\dot{H}^{s}} \mathrm{d}\tau \nonumber \\
   & + {\sum_{j \geq -1}} 2^{(2s+1)j} {\int^t_0} \|\nabla \Bh\|_{L^\infty}\big(  \| {\Delta}_j \Bh\|^2_{L^2}+\big( {\sum_{k \geq j-1}} 2^{j-k} \| {\Delta}_k \Bh\|_{L^2}  \big)^2\big) \mathrm{d}\tau.
\end{align}
Assembling the estimates \eqref{EJ0}, \eqref{EJ1}, \eqref{EJ6},\eqref{EJ2}, \eqref{EJ3}, \eqref{EJ4}, \eqref{EJ8}, and \eqref{EJ12}, we obtain
\begin{align}\label{highe4}
\Big | \int^t_0 {\sum_{j \geq -1}}2^{2sj} {\sum_{k=0}^{12}} J_k \mathrm{d}\tau  \Big|  &\lesssim \int^t_0 \|(\nabla \Bf,\nabla\Bh)\|_{L^\infty} \big(\|  \Bf\|^2_{\dot{H}^s}+ \|  \Bh \|^2_{\dot{H}^s} \big) \mathrm{d}\tau \nonumber\\
  & +  {\sum_{k \geq j-1}} 2^{(2s+1)j} {\int^t_0} \|\nabla \Bh\|_{L^\infty}\big(  \| {\Delta}_j \Bh\|^2_{L^2}+\big( {\sum_{k \geq j-1}} 2^{j-k} \| {\Delta}_k \Bh\|_{L^2} \big )^2 \big) \mathrm{d}\tau\nonumber\\
  & +  \int^t_0 \big( \| (\nabla \times \bB) \times \bB \|_{\dot{H}^{s+\frac{1}{2}}} \|\Lambda^{\frac{1}{2}} \Bh\|_{\dot{H}^s} +   \|  \bB \|_{W^{s+\frac{3}{2},\infty}} \|\Lambda^{\frac{1}{2}} \Bh\|_{{H}^s} \|\Lambda^{\frac{1}{2}} \Bh\|_{\dot{H}^s} \big) \mathrm{d}\tau\nonumber\\
  &+  \int^t_0 \|(\bU, \bB)\|_{W^{s+1,\infty}} \|(\Bh,\Bf)\|^2_{{H}^s} \mathrm{d}\tau.
\end{align}
Substituting \eqref{highe3} and \eqref{highe4} to \eqref{highe2}, we get
\begin{align}\label{h2}
 & \| \Bf(t) \|^2_{\dot{H}^s}+ \| \Bh(t) \|^2_{\dot{H}^s}+ \int^t_0 \| \Lambda^{\alpha} \Bf(\tau) \|^2_{\dot{H}^s} d \tau+  \int^t_0 \| \Lambda^{\frac{1}{2}} \Bh(\tau) \|^2_{\dot{H}^s} \mathrm{d} \tau \nonumber\\
  \lesssim \ & \|  \bu_{01} \|^2_{\dot{H}^s}+ \|  \bb_{01} \|^2_{\dot{H}^s}+ \int^t_0 \|(\nabla \Bf,\nabla\Bh)\|_{L^\infty} \big(\| \Bf\|^2_{\dot{H}^s}+ \| \Bh \|^2_{\dot{H}^s} \big) \mathrm{d}\tau\nonumber\\
  & +  {\sum_{j \geq -1}} 2^{(2s+1)j} \displaystyle{\int^t_0} \|\nabla \Bh\|_{L^\infty}\big(  \| {\Delta}_j \Bh\|^2_{L^2}+( {\sum_{k \geq j-1}} 2^{j-k} \| {\Delta}_k \Bh\|_{L^2}  )^2 \big) \mathrm{d}\tau\nonumber\\
  & +  \int^t_0 \big( \| (\nabla \times \bB) \times \bB \|_{\dot{H}^{s+\frac{1}{2}}} \|\Lambda^{\frac{1}{2}} \Bh\|_{\dot{H}^s} +   \|  \bB \|_{W^{s+\frac{3}{2},\infty}} \|\Lambda^{\frac{1}{2}} \Bh\|_{H^s}\|\Lambda^{\frac{1}{2}} \Bh\|_{\dot{H}^s} \big) \mathrm{d}\tau\nonumber\\
  &+  \int^t_0 \|(\bU,  \bB)\|_{W^{s+1,\infty}} \|(\Bh,\Bf)\|^2_{{H}^s} \mathrm{d}\tau +\int^t_0 \big( \|\bF\|_{\dot{H}^s} \|\Bf\|_{\dot{H}^s}+ \|\bG\|_{\dot{H}^s} \|\Bh\|_{\dot{H}^s} \big) \mathrm{d}\tau.
\end{align}
By using the Gagliardo-Nirenberg inequalities (c.f. \eqref{GN}) and Sobolev embedding
$$
\|(\nabla \Bf,\nabla\Bh)\|_{L^\infty} \leq \| (\Bf,\Bh)\|_{H^s}, \quad s>\frac{5}{2},
$$
we can show that
\begin{align}\label{GN0}
  \|(\nabla \Bf,\nabla\Bh)\|_{L^\infty} \big(\| \Bf\|^2_{\dot{H}^s}+ \| \Bh \|^2_{\dot{H}^s} \big)
  &= \|(\nabla \Bf,\nabla\Bh)\|_{L^\infty} \big(\| \Lambda^s \Bf \|^2_{L^2} + \| \Lambda^{s} \Bh \|^2_{L^2}\big)\nonumber\\
  &\lesssim  \| (\Bf,\Bh)\|_{H^s}\big(\| \Lambda^\alpha \Bf \|^2_{H^s} + \| \Lambda^{\frac{1}{2}} \Bh \|^2_{H^s}\big).
\end{align}
Inserting \eqref{GN0} into \eqref{h2}, we update \eqref{GN0} as
\begin{align}\label{h21}
 & \| \Bf(t) \|^2_{\dot{H}^s}+ \| \Bh(t) \|^2_{\dot{H}^s}+ \int^t_0 \| \Lambda^{\alpha} \Bf(\tau) \|^2_{\dot{H}^s} d \tau+  \int^t_0 \| \Lambda^{\frac{1}{2}} \Bh(\tau) \|^2_{\dot{H}^s} \mathrm{d} \tau \nonumber\\
  \lesssim \ & \|  \bu_{01} \|^2_{\dot{H}^s}+ \|  \bb_{01} \|^2_{\dot{H}^s}+ \int^t_0 \| (\Bf,\Bh)\|_{H^s}\big(\| \Lambda^\alpha \Bf \|^2_{H^s} + \| \Lambda^{\frac{1}{2}} \Bh \|^2_{H^s}\big) \mathrm{d}\tau\nonumber\\
  & +   \displaystyle{\int^t_0} \|\Bh\|_{H^s}\big(  \| \Lambda^{\frac{1}{2}} \Bh\|^2_{\dot{H}^s}+{\sum_{j \geq -1}} 2^{(2s+1)j} ( {\sum_{k \geq j-1}} 2^{j-k} \| {\Delta}_k \Bh\|_{L^2}  )^2 \big) \mathrm{d}\tau\nonumber\\
  & +  \int^t_0 \big( \| (\nabla \times \bB) \times \bB \|_{\dot{H}^{s+\frac{1}{2}}} \|\Lambda^{\frac{1}{2}} \Bh\|_{\dot{H}^s} +   \|  \bB \|_{W^{s+\frac{3}{2},\infty}} \|\Lambda^{\frac{1}{2}} \Bh\|_{{H}^s} \|\Lambda^{\frac{1}{2}} \Bh\|_{\dot{H}^s} \big) \mathrm{d}\tau\nonumber\\
  &+  \int^t_0 \|(\bU,  \bB)\|_{W^{s+1,\infty}} \|(\Bh,\Bf)\|^2_{{H}^s} \mathrm{d}\tau +\int^t_0 \big( \|\bF\|_{\dot{H}^s} \|\Bf\|_{\dot{H}^s}+ \|\bG\|_{\dot{H}^s} \|\Bh\|_{\dot{H}^s} \big) \mathrm{d}\tau.
\end{align}
For the term involving summation on the right-hand side of \eqref{h21}, using Young's inequality for series convolution, we deduce that
\begin{align}\label{yc}
 {\sum_{j \geq -1}} 2^{(2s+1)j}  \big( {\sum_{k \geq j-1}} 2^{j-k} \| {\Delta}_k \Bh\|_{L^2}  \big)^2 = & {\sum_{j \geq -1}}  \big( {\sum_{k \geq j-1}} 2^{(j-k)(s+\frac{3}{2})} 2^{(s+\frac{1}{2})k}\| {\Delta}_k \Bh\|_{L^2}  \big)^2 \nonumber\\
 \lesssim & {\sum_{j \geq -1}} 2^{(2s+1)j} \| {\Delta}_j \Bh\|^2_{L^2}  = \|\Lambda^{\frac{1}{2}}\Bh\|^2_{\dot{H}^s}.
\end{align}
Using \eqref{yc}, we update \eqref{h21} as
\begin{align}\label{h22}
 & \| \Bf(t) \|^2_{\dot{H}^s}+ \| \Bh(t) \|^2_{\dot{H}^s}+ \int^t_0 \| \Lambda^{\alpha} \Bf(\tau) \|^2_{\dot{H}^s} d \tau+  \int^t_0 \| \Lambda^{\frac{1}{2}} \Bh(\tau) \|^2_{\dot{H}^s} \mathrm{d} \tau \nonumber\\
  \lesssim \ & \|  \bu_{01} \|^2_{\dot{H}^s}+ \|  \bb_{01} \|^2_{\dot{H}^s}+ \int^t_0 \| (\Bf,\Bh)\|_{H^s}\big(\| \Lambda^\alpha \Bf \|^2_{H^s} + \| \Lambda^{\frac{1}{2}} \Bh \|^2_{H^s}\big) \mathrm{d}\tau\nonumber\\
  & +   \displaystyle{\int^t_0} \|\Bh\|_{H^s} \| \Lambda^{\frac{1}{2}} \Bh\|^2_{\dot{H}^s}\mathrm{d}\tau+  \int^t_0 \|(\bU,  \bB)\|_{W^{s+1,\infty}} \|(\Bh,\Bf)\|^2_{\dot{H}^s} \mathrm{d}\tau\nonumber\\
  & +  \int^t_0 \big( \| (\nabla \times \bB) \times \bB \|_{\dot{H}^{s+\frac{1}{2}}} \|\Lambda^{\frac{1}{2}} \Bh\|_{\dot{H}^s} +   \|  \bB \|_{W^{s+\frac{3}{2},\infty}} \|\Lambda^{\frac{1}{2}} \Bh\|_{{H}^s} \|\Lambda^{\frac{1}{2}} \Bh\|_{\dot{H}^s} \big) \mathrm{d}\tau\nonumber\\
  & +\int^t_0 \big( \|\bF\|_{\dot{H}^s} \|\Bf\|_{\dot{H}^s}+ \|\bG\|_{\dot{H}^s} \|\Bh\|_{\dot{H}^s} \big) \mathrm{d}\tau.
\end{align}
Adding \eqref{h22} to \eqref{L2}, we can show that
\begin{align}\label{fi}
  &\| \Bf(t) \|^2_{H^s}+ \| \Bh(t) \|^2_{H^s}+ \int^t_0 \| \Lambda^{\alpha} \Bf(\tau) \|^2_{H^s} \mathrm{d} \tau+  \int^t_0 \| \Lambda^{\frac{1}{2}} \Bh(\tau) \|^2_{H^s}  \mathrm{d}\tau \nonumber\\
     \lesssim & \|  \bu_{01} \|^2_{H^s}+ \| \bb_{01} \|^2_{H^s}+  \sup_{\tau \in [0,t]}\| (\Bf,\Bh)(\tau)\|_{H^s} \int^t_0 \big(\| \Lambda^\alpha \Bf \|^2_{H^s} + \| \Lambda^{\frac{1}{2}} \Bh \|^2_{H^s}\big) \mathrm{d}\tau\nonumber\\
   & +  \big(\int^t_0  \| (\nabla \times \bB) \times \bB \|^2_{H^{s+\frac{1}{2}}} \mathrm{d}\tau\big)^{\frac{1}{2}} \big(\int^t_0 \|\Lambda^{\frac{1}{2}} \Bh\|^2_{H^s} \mathrm{d}\tau \big)^{\frac{1}{2}}
   \nonumber\\
   & +  \big(\sup_{t\geq 0}\|  \bB (t) \|_{W^{s+\frac{3}{2},\infty}}\big) \int^t_0    \| \Lambda^{\frac{1}{2}} \Bh\|^2_{H^s}  \mathrm{d}\tau
   +  \int^t_0 \|(\bU, \bB)\|_{W^{s+1,\infty}} \|(\Bh,\Bf)\|^2_{H^s}\mathrm{d}\tau
 \nonumber \\
  & + \sup_{\tau \in [0,t]} \big(\|\Bf(\tau)\|_{H^s}+\|\Bh(\tau)\|_{H^s}\big) \int^t_0 \left( \|\bF\|_{H^s} + \|\bG\|_{H^s} \right) \mathrm{d}\tau.
\end{align}
Next, we derive a global energy bound for $(\Bf,\Bh)$, by using the estimate of $(\bU,\bB)$, {\it a priori} smallness assumption, and standard continuation argument.

{\bf Step 6.} Set
\begin{equation*}
  E(t)=\| \Bf(t) \|^2_{{H}^s}+ \| \Bh(t) \|^2_{{H}^s}+ \int^t_0 \|\Lambda^{\alpha} \Bf(\tau) \|^2_{{H}^s} \mathrm{d} \tau +\int^t_0 \| \Lambda^{\frac{1}{2}} \Bh(\tau) \|^2_{{H}^s} \mathrm{d} \tau.
\end{equation*}
Then it follows from \eqref{fi} that
\begin{align}\label{EW}
  E(t) \lesssim  & \| {\bu}_{01} \|^2_{H^s}+ \| {\bb}_{01} \|^2_{H^s} + \underbrace{\big(\sup_{\tau\in [0,t]} E(\tau)\big)^{\frac{1}{2}} \, E(t)}_{\equiv R_1} + \underbrace{\| (\nabla \times \bB) \times \bB \|_{L^2_t H^{s+\frac{1}{2}}} E^{\frac{1}{2}}(t)}_{\equiv R_2}\nonumber
  \\
  & + \underbrace{\int^t_0  \|(\bU, \bB)\|_{ W^{s+1,\infty}}\, E(\tau) \mathrm{d}\tau}_{\equiv R_3}+\underbrace{\big(\sup_{t\in[0,T]}\|  \bB (t) \|_{W^{s+\frac{3}{2},\infty}}\big)\, E(t)}_{\equiv R_4}\nonumber
  \\
  & + \underbrace{\big (\sup_{\tau\in [0,t]} E(\tau)\big)^{\frac{1}{2}} \int^t_0  \big (\|\bF\|_{H^s} + \|\bG\|_{H^s} \big)   \mathrm{d}\tau}_{\equiv R_5},
\end{align}
where $t\in(0,T]$, and $T>0$ is within the lifespan of local solution. Suppose $\sup_{t\in[0,T]}E(t)$ is sufficiently small.

First, note that when $\sup_{t\in[0,T]}E(t)$ is sufficiently small, $R_1$ can be absorbed by the left-hand side of \eqref{EW}. Second, using the fact $\mathrm{supp}\,\widehat{\bB} (t,\cdot) \subseteq \{\bm \xi: 1-\varepsilon < |\bm \xi| < 1+\varepsilon \}$, $0< \varepsilon< \frac{1}{2}$, and \eqref{111} and \eqref{UB1}, we can show that
\begin{equation*}
  \|  \bB (t) \|_{W^{s+\frac{3}{2},\infty}} \lesssim \|  \widehat{\bB} (t,\cdot) \|_{L^1_{\bm \xi}} \lesssim  \big\| e^{-|\bm\xi|t} \gamma(\bm \xi)\widehat{\bv}_0 \big\|_{L^1_{\bm \xi}} \lesssim \|\widehat{\bv}_0 \|_{L^1_{\bm \xi}} \lesssim \delta.
\end{equation*}
Hence, when $\sup_{t\in[0,T]}E(t)$ is sufficiently small, $R_4$ can be absorbed by the left-hand side of \eqref{EW}. Using Cauchy's inequality, we can update \eqref{EW} as
\begin{align}\label{Z1}
 E(t) \lesssim  & \|  {\bu}_{01} \|^2_{H^s}+ \| {\bb}_{01} \|^2_{H^s}
   + \underbrace{\| (\nabla \times \bB) \times \bB \|_{L^2_t H^{s+\frac{1}{2}}}^2}_{\equiv R_2'} \nonumber\\
  & + \underbrace{\int^t_0  \|(\bU, \bB)\|_{ W^{s+1,\infty}} \, E(\tau) \mathrm{d}\tau}_{R_3} + \underbrace{\big(\sup_{\tau\in [0,t]} E(\tau)\big)^{\frac{1}{2}} \int^t_0 \big ( \|\bF\|_{H^s} + \|\bG\|_{H^s} \big) \mathrm{d}\tau }_{R_5}.
\end{align}
Note that according to \eqref{U1} and \eqref{111}, we have
\begin{align}\label{Z2}
R_2' \lesssim \big(\varepsilon^{s+\frac{1}{2}} \|\widehat{\bv}_0\|_{L^2_{\bm\xi}}\|\widehat{\bv}_0\|_{L^1_{\bm\xi}}\big)^2 \lesssim \delta \big(\varepsilon^{s+\frac{1}{2}} \|\widehat{\bv}_0\|_{L^2_{\bm\xi}}\|\widehat{\bv}_0\|_{L^1_{\bm\xi}}\big).
\end{align}
Recall that $\bU$ and $\bB$ satisfy \eqref{U} and \eqref{B}, respectively. Using the information about the support of $\widehat{\bU}$ and $\widehat{\bB}$, we can show that
\begin{equation}\label{Z3}
\begin{split}
   \|(\bU, \bB)\|_{ W^{s+1,\infty}}  & \lesssim  (1+|\bm\xi |^2)^{\frac{s}{2}} \|(\widehat \bU, \widehat\bB) \|_{ L^1_{\bm\xi}}  \lesssim  \|(\widehat \bU, \widehat\bB) \|_{ L^1_{\bm\xi}}.
\end{split}
\end{equation}
Moreover, recall that $\bF= {\bU}\times (\nabla \times {\bU})-{\bB}\times \left(\nabla \times {\bB}\right), \quad \bG= \nabla \times ( {\bU} \times {\bB})$. Using similar arguments as those in the proof of Lemma \ref{L1}, we can show that
\begin{align}\label{Z4}
\int^t_0 \big( \|\bF\|_{H^s} + \|\bG\|_{H^s} \big) \mathrm{d}\tau \lesssim (\varepsilon^s +\varepsilon )\|\widehat{\bv}_0\|_{L^2_{\bm\xi}}\|\widehat{\bv}_0\|_{L^1_{\bm\xi}} \lesssim \varepsilon \|\widehat{\bv}_0\|_{L^2_{\bm\xi}}\|\widehat{\bv}_0\|_{L^1_{\bm\xi}},
\end{align}
where we used the fact that $0<\varepsilon < 1$.  Using \eqref{Z2}--\eqref{Z4}, we update \eqref{Z1} as
\begin{equation*}
  \begin{split}
  E(t) \lesssim & \|  {\bu}_{01} \|^2_{H^s}+ \| {\bb}_{01} \|^2_{H^s} + \delta \big(\varepsilon^{s+\frac{1}{2}} \|\widehat{\bv}_0\|_{L^2_{\bm\xi}}\|\widehat{\bv}_0\|_{L^1_{\bm\xi}}\big) + \big(\sup_{\tau\in [0,t]} E(\tau)\big)^{\frac{1}{2}}\, \varepsilon\, \|\widehat{\bv}_0\|_{L^2_{\bm\xi}}\|\widehat{\bv}_0\|_{L^1_{\bm\xi}}\\
  & + \int^t_0 \| (\widehat \bU, \widehat{\bB}) \|_{L^1_{\bm\xi}} E(\tau) \mathrm{d}\tau.
  \end{split}
\end{equation*}
Note that according to \eqref{UB1}, it holds that
\begin{align}\label{Z5}
\int^t_0 \| (\widehat \bU, \widehat{\bB}) \|_{L^1_{\bm\xi}} \mathrm{d}\tau \lesssim \|\widehat{\bv}_0\|_{L^1_{\bm\xi}}.
\end{align}
Therefore, the global energy bound of $E(t)$ follows from Gr\"onwall's inequality, the smallness of $\delta$, {\it a priori} smallness assumption on $\sup_{t\in[0,T]}E(t)$, \eqref{Z5}, \eqref{111}, and standard continuation argument. Combing the global energy bound of $E(t)$ and the local existence of its solutions (please see Remark \ref{rf} and Section 3.2), we can complete the proof of Theorem \ref{thm2}. \hfill$\square$
\begin{remark}\label{rf}
The local-in-time existence of Theorem \ref{thm2} can be obtained similarly as in Section 3.2, for we can use the similar approximation procedure, Picard's theorem and the global bound of $E(t)$ to get a limit $(\Bf, \Bh)$ for \eqref{fh}. Then, by using interpolation inequality and a strong convergence to conclude that $(\Bf, \Bh) \in C([0,1]; H^a), 0<a<s$. Then $(\Bf, \Bh)$ satisfies the equation \eqref{fh}, and it's the local solution of \eqref{fh}.
\end{remark}

\section*{Acknowledgements}
The author would like to express thanks for the reviewers for much helpful advice. The research of H.-L. Zhang was partially supported by Hunan Provincial Key Laboratory of Intelligent Processing of Big Data on Transportation, Changsha University of Science and Technology. K. Zhao was partially supported by the Simons Foundation Collaboration Grant for Mathematicians (No. 413028).
\section*{Conflicts of interest}
The authors declared that this work does not have any conflicts of interest.

\end{document}